 \newcommand{\inv}{^{-1}}
 \newcommand{\T}{^T}
\theoremstyle{plain}
\newtheorem{theorem}{Theorem}[section]
\newtheorem{lemma}[theorem]{Lemma}
\newtheorem{proposition}[theorem]{Proposition}
\theoremstyle{definition}
\theoremstyle{remark}
  \renewcommand{\fnum@figure}{\textbf
  {\figurename~\thefigure}}
  \renewcommand{\fnum@table}{\textbf
  {\tablename~\thetable}}
\begin{document}
\chapterstyle{article}
\chapter*[]{Exact linesearch limited-memory quasi-Newton methods for minimizing a
  quadratic function}
\thispagestyle{minech} 

David Ek\footnote{Optimization and Systems Theory, Department of
  Mathematics, KTH Royal Institute of Technology, SE-100 44 Stockholm,
  Sweden (\texttt{daviek@kth.se,andersf@kth.se}).\label{fn} } and Anders Forsgren\footref{fn}   \footnote[0]{Research supported by the Swedish Research Council (VR).}

\begin{abstract}
  The main focus in this paper is exact linesearch methods for
  minimizing a quadratic function whose Hessian is positive
  definite. We give a class of limited-memory quasi-Newton Hessian
  approximations which generate search directions parallel to those of
  the method of preconditioned conjugate gradients, and hence give
  finite termination on quadratic optimization problems in exact
  arithmetic. With the framework of reduced-Hessians this class
  provides a dynamical framework for the construction of
  limited-memory quasi-Newton methods. We give an indication of the
  performance of the methods within this framework by showing
  numerical simulations on sequences of related systems of linear
  equations, which originate from the CUTEst test collection.
  
  In addition, we give a compact representation of the Hessian
  approximations in the full Broyden class for the general
  unconstrained optimization problem. This representation consists of
  explicit matrices and gradients only as vector components.
  
  \medskip\noindent
  \textbf{Keywords:} method of conjugate gradients, quasi-Newton method, unconstrained quadratic program, limited-memory method, exact linesearch method.
  
\end{abstract}

\section{Introduction} \label{intro}
In this work we mainly study the behavior of limited-memory quasi-Newton methods on unconstrained quadratic optimization problems on the form
\begin{equation}
\min_{x \in \mathbb{R}^n} \frac{1}{2} x^T H x + c^Tx, 
\label{eq:QP} \tag{QP}
\end{equation}
where $H=H^T$ and $H \succ 0$. (Throughout, ``$\succ$'' is used to
denote positive definite and we denote the corresponding set of matrices by $\mathcal{S}_+^n$, i.e. $\mathcal{S}_+^n = \{ B_0 \in \mathbb{R}^{n \times n} : B_0 = B_0^T \mbox{ and } B_0 \succ 0 \}$.) In particular, we study exact linesearch
limited-memory quasi-Newton methods that generate search directions
parallel to those of the method of preconditioned conjugate gradients
(PCG). Under exact linesearch parallel search
directions imply identical iterates. 

The motivation for this work originates from applications in which it is desired to solve or approximately solve a sequence of related systems of linear equations. In particular systems where the matrix is symmetric positive definite. Such sequences occur when solving unconstrained nonlinear optimization problems with Newton's method and in interior-point methods, which constitute some of the most widely used methods in numerical
optimization. As the problems become larger the arising systems of
linear equations typically become increasingly computationally
expensive to solve and iterative methods may be considered. In exact
arithmetic, our model method is the method of preconditioned conjugate
gradients, but this method may be too inaccurate in finite
precision. Quasi-Newton methods may be expected to be significantly
more accurate, but the computational cost is typically too high. In
consequence, we aim for less computationally expensive limited-memory
versions of quasi-Newton methods that are more accurate than the
method of preconditioned conjugate gradients. The goal is to provide
better understanding of whether it is viable and/or efficient to use
such methods to solve or approximately solve systems of linear equations
that arise as Newton's method or interior-point methods converge.

We envisage the use of the limited-memory quasi-Newton methods as an
accelerator for a direct solver when solving a sequence of systems of
linear equations. E.g., when the direct solver and the iterative
solver can be run in parallel, and where the preconditioner is updated
when the direct solver is faster for some system of linear equations
in the sequence.

We mainly propose a framework for the construction of reduced-Hessian
limited-memory quasi-Newton methods. To give an indication of their
potential we construct two examples in this framework for which
we give numerical results.

Limited-memory quasi-Newton
methods have previously been studied by various authors, e.g., as
memory-less quasi-Newton methods by Shanno \cite{Sha78n},
limited-memory BFGS (L-BFGS) by Nocedal \cite{Noc80n} and more
recently as limited-memory reduced-Hessian methods by Gill and Leonard
\cite{GilLeo03}. In contrast, we specialize to exact linesearch
methods for problems on the form (\ref{eq:QP}). The model method is
PCG, which is interpreted as a particular quasi-Newton method as is
done by e.g., Shanno \cite{Sha78n} and Forsgren and Odland
\cite{ForOdl18}. We start from a result by Forsgren and Odland
\cite{ForOdl18}, which provides necessary and sufficient conditions on
the Hessian approximation for exact linesearch methods on (\ref{eq:QP})
to generate search directions that are parallel to those of PCG. The
focus is henceforth directly on Hessian approximations with this
property. The approximations are described by a novel compact
representation which contains explicit matrices together with
gradients and search directions as vector components.  The framework
for the compact representation is first given for the full Broyden
class where we consider unconstrained optimization problems on the
form
\begin{equation} \tag{P}
\min_{x \in \mathbb{R}^n} f(x),
\label{eq:uncP}
\end{equation}
where the function $f: \mathbb{R}^n \rightarrow \mathbb{R}$ is assumed to be smooth. Compact representations of quasi-Newton matrices have previously been used by various authors but were first introduced by Byrd, Nocedal and Schnabel \cite{ByrNocSna94}. They were thereafter extended to the convex Broyden class by Erway and Marcia \cite{ErwMar14,ErwMar17}, and to the full Broyden class by DeGuchy, Erway and Marcia \cite{DeG17}. In contrast, we give an alternative compact representation of the Hessian approximations in the full Broyden class which only contains explicit matrices and gradients as vector components. In addition we discuss how exact linesearch is reflected in this representation. 

Compact representations of limited-memory Hessian approximations in
the Broyden class are also discussed by Byrd, Nocedal and Schnabel
\cite{ByrNocSna94} and Erway and Marcia \cite{ErwMar17}. In contrast,
our discussion is on limited-memory representations of Hessian
approximations intended for exact linesearch methods for problems on
the form (\ref{eq:QP}), and the approximations are not restricted to
the Broyden class. In addition, our alternative representation
provides a dynamical framework for the construction of limited-memory
approximations for the mentioned purpose.

In Section~\ref{section:BG} we provide a brief background to
quasi-Newton methods, unconstrained quadratic optimization problems
(\ref{eq:QP}) and to the groundwork that provides the basis for this
study. Section~\ref{section:CR} contains the alternative compact
representation for the full Broyden class. In Section~\ref{section:QP}
we present results which include a class of limited-memory Hessian
approximations together with a discussion of how to solve the
systems of linear equations that arise using reduced-Hessian methods. Section~\ref{section:numRes} contains numerical results on randomly generated problems on the form (\ref{eq:QP}) and on systems of linear equations which originate from the CUTEst test collection \cite{DolMor2002}. Finally in Section~\ref{section:conc} we give some concluding remarks. 

\subsection{Notation}
Throughout, $\mathcal{R}(M)$ and $\mathcal{N}(M)$ denote the range and
the nullspace of a matrix $M$ respectively. Moreover, $e_i$ denotes
the $i$th unit vector of the appropriate dimension and $| \mathcal{S} |$
denotes the cardinality of a set $\mathcal{S}$.

\section{Background} \label{section:BG}
In this section we give a short introduction to quasi-Newton methods for unconstrained optimization problems on the form (\ref{eq:uncP}). Thereafter, we give a background to unconstrained quadratic optimization problems (\ref{eq:QP}) and to the groundwork that provides the basis for this study.  
\subsection{Background on quasi-Newton methods}
Quasi-Newton methods were first introduced as variable metric methods
by Davidon~\cite{Dav91} and later formalized by Fletcher and Powell~\cite{FlePow63}. For a thorough introduction to quasi-Newton methods see, e.g., \cite[Chapter 3]{Fle87a} and \cite[Chapter 6]{NocWri06}. In quasi-Newton methods the search direction, $p_k$, at iteration $k$ is generated by
\begin{equation} 
B_k p_k = - g_k,
\label{eq:QN}
\end{equation}
where $B_k$ is an approximation of the true Hessian $\nabla^2f(x_k)$ and $g_k$ is the gradient $\nabla f(x_k)$. The symmetric two-parameter class of Huang \cite{Hua70} satisfies the scaled secant condition
\begin{align}
B_ks_{k-1} = \sigma_k y_{k-1},
\label{eq:scaledSecCond}
\end{align}
where $s_{k-1} = x_k - x_{k-1}$, $y_{k-1} = g_k - g_{k-1}$ and $\sigma_k$ is one of the free parameters. The most well-known quasi-Newton class is obtained if $\sigma_k=1$ in (\ref{eq:scaledSecCond}), namely the one-parameter Broyden class  which updates $B_{k-1}$ to
\begin{align}
B_k & =  B_{k-1} - \frac1{s_{k-1}^T B_{k-1} s_{k-1}} B_{k-1} s_{k-1} s_{k-1}^T B_{k-1}  +  \frac1{y_{k-1}^Ts_{k-1}} y_{k-1}y_{k-1}^T \nonumber \\ & \quad +\phi_{k-1} \omega_{k-1} \omega_{k-1}^T,
\label{eq:broydenFamys}
\end{align}
where
\[
\omega_{k-1} = \big(s_{k-1}^TB_{k-1}s_{k-1} \big)^{1/2}\Big(\frac1{y_{k-1}^Ts_{k-1}} y_{k-1} - \frac1{s_{k-1}^TB_{k-1}s_{k-1}} B_{k-1}s_{k-1} \Big),
\]
with $\phi_{k-1}$ as the free parameter \cite{Fle94}. The
Broyden-Fletcher-Goldfarb-Shanno (BFGS) update scheme is obtained if
$\phi_{k-1}=0$ and Davidon-Fletcher-Powell (DFP) if $\phi_{k-1}=
1$. In this work we study Hessian
approximations described by compact representations with gradients and search directions as vector
components. We will therefore throughout this work explicitly use the
quantities $g$, $p$ and the steplength $\alpha$ in all equations. In
this notation, the Broyden class Hessian approximations in (\ref{eq:broydenFamys}) may be written as
\begin{align}
B_k & =  B_{k-1} + \frac1{g_{k-1}^T p_{k-1}} g_{k-1} g_{k-1}^T \nonumber \\ &\quad +  \frac1{\alpha_{k-1}\left( g_k - g_{k-1} \right)^Tp_{k-1}} \left( g_k - g_{k-1} \right)\left( g_k - g_{k-1} \right)^T  +\phi_{k-1} \omega_{k-1} \omega_{k-1}^T,
\label{eq:broydenFam}
\end{align}
where
\[
\omega_{k-1} = \left(-g_{k-1}^Tp_{k-1} \right)^{1/2}\left( \frac1{\left( g_k - g_{k-1} \right)^Tp_{k-1}} \left( g_k - g_{k-1} \right) - \frac1{g_{k-1}^T p_{k-1}} g_{k-1} \right).
\]
As shown in (\ref{eq:broydenFam}), the previous
Hessian approximation is in general updated by a rank-two matrix with range equal to the space spanned by the current and the previous
gradient. Furthermore, it is well known that under exact linesearch
all Broyden class updates generates identical iterates, as shown by Dixon~\cite{Dix72_1}. 
The case $\phi_{k-1}=0$ in (\ref{eq:broydenFam}), i.e., the BFGS
update, will have a particular role in part of our analysis. We will
refer to quantities $B_k$, $p_k$ and $\alpha_k$ corresponding to this
case as $B_k^{BFGS}$, $p_k^{BFGS}$ and $\alpha_k^{BFGS}$.

\subsection{Background on quadratic problems}
Solving (\ref{eq:QP}) is equivalent to solving the linear system 
\begin{equation}
Hx+c = 0,
\label{eq:QPoptCond}
\end{equation}
which has a unique solution if $H \succ 0$. The problem (\ref{eq:QP}), and hence (\ref{eq:QPoptCond}), is in this work solved by an exact linesearch method on the following form. The steplength, iterate and gradient at iteration $k$ is updated as
\[
\alpha_{k} = -\frac{g_{k}^T p_{k}}{p_{k}^T H p_{k}}, \qquad x_{k+1} =  x_{k} + \alpha_{k} p_{k}, \qquad g_{k+1} = g_{k} + \alpha_{k} H p_{k},
\]
which together with a specific formula for $p_k$ constitute the particular exact linesearch method. The model method is summarized in the Algorithm~\ref{alg:ELSMQP} below.  
\begin{algorithm}[H]
\caption{An exact linesearch method for solving (\ref{eq:QP}).}
\begin{footnotesize}
\begin{algorithmic}[1]
\Statex $k \gets 0$, \qquad $x_k \gets$ Initial point, \qquad $g_k \gets Hx_k + c$ 
\Statex \textbf{While} $\| g_k \| \neq 0$ \textbf{do}
\Statex\hspace{\algorithmicindent} $p_k  \gets $ search direction 
\Statex\hspace{\algorithmicindent} $\alpha_{k}  \gets -\frac{g_{k}^T p_{k}}{p_{k}^T H p_{k}}$
\Statex\hspace{\algorithmicindent} $x_{k+1} \gets x_k + \alpha_k p_k$
\Statex\hspace{\algorithmicindent} $g_{k+1} \gets g_k + \alpha_k H p_k$ 
\Statex\hspace{\algorithmicindent} $k  \gets k+1$
\Statex \textbf{End}
\end{algorithmic}
\end{footnotesize}
\label{alg:ELSMQP}
\end{algorithm} 
\noindent The search direction in Algorithm~\ref{alg:ELSMQP} may be calculated using PCG with a symmetric positive definite preconditioner $M$. The corresponding algorithm for solving (\ref{eq:QPoptCond}) can be formulated using the Cholesky factor $L$ defined by $M = L L^T$. This is equivalent to application of the method of conjugate gradients (CG) to the preconditioned linear system
\begin{equation}
L^{-1}HL^{-T} \hat{x} + L^{-1}c = 0, 
\label{eq:PcondQPoptCond}
\end{equation}
with $\hat{x}= L^T x$, see, e.g., Saad \cite[Chapter 9.2]{Saa03}. If all quantities generated by CG on (\ref{eq:PcondQPoptCond}) are denoted by "$\>\hat{\>}\>$", then these quantities will relate to those from CG on (\ref{eq:QPoptCond}) as, $\hat{g} = L^{-1}g$ and $\hat{p} = L^{T}p$. The iteration space when $M = I$ or when $M$ is an arbitrary symmetric positive definite matrix will thus be related through a linear transformation. 
In this work the following PCG update is considered,
\begin{equation}
p_k^{PCG} =  
\begin{cases}
-M\inv g_0 & k = 0, \\
-M^{-1}g_{k} + \frac{g_k^TM^{-1}g_k}{g_{k-1}^TM^{-1} g_{k-1}} p_{k-1} & k \geq 1.
\end{cases}
\label{eq:pPCG}
\end{equation}
The discussion in this work is mainly on Hessian approximations $B_k$ that generate $p_k$ parallel to $p_k^{PCG}$.  We will therefore hereinafter only consider the preconditioner $M = B_0$ where $B_0 \in \mathcal{S}_+^n$. If no preconditioner is used, i.e. $B_0=I$, then (\ref{eq:pPCG}) is the update referred to as Fletcher-Reeves, which together with the exact linesearch method of Algorithm~\ref{alg:ELSMQP} is equivalent to the method of conjugate gradients by Hestenes and Stiefel \cite{HesSte52}. If the search direction
(\ref{eq:pPCG}) is used in Algorithm~\ref{alg:ELSMQP}, the method
terminates when $\| g_r \| = 0 $ for some $r$ where $r \leq n$ and $x_r$ solves
(\ref{eq:QP}). The search directions generated by the method are
mutually conjugate with respect to $H$ and satisfy $p_i \in
\textit{span}\big( \{B_0\inv g_0, \dots, B_0\inv g_i \} \big)$,
$i=0,\dots,r$. In addition, it holds that the generated gradients are
mutually conjugate with respect to $B_0\inv$, i.e. $g_i\T B_0\inv g_j=0$,
$i\ne j$. By expanding (\ref{eq:pPCG}), the search direction of PCG may be expressed as
\begin{equation}
p_k^{PCG} = -g_k^T B_0^{-1} g_k \sum_{i=0}^k \frac{1}{g_i^TB_0^{-1}g_i}B_0^{-1}g_i.
\label{eq:pPCGexp}
\end{equation}
Forsgren and Odland~\cite{ForOdl18} have provided necessary and
sufficient conditions on $B_k$ for an exact linesearch method to
generate $p_k$ parallel to $p_k^{PCG}$. This result provides
the basis of our work and is therefore reviewed. Under exact
linesearch and $B_0 \in \mathcal{S}_+^n$ it holds that $g_k^T
p_k^{PCG} = - g_k^T B_0 \inv g_k$. With $p_{k-1}^{PCG} =
p_{k-1}/\delta_{k-1}$, (\ref{eq:pPCG}) can be written as
\begin{equation} \label{eq:qNPCG}
p_{k}^{PCG} = - C_k \inv B_0 \inv g_k,
\end{equation}
where 
\begin{equation} \label{eq:Ck}
C\inv _k = I + \frac{1}{g_{k-1}^Tp_{k-1}} p_{k-1}g_k^T,
\text{which imply} \
C_k = I - \frac{1}{g_{k-1}^Tp_{k-1}} p_{k-1}g_k^T.
\end{equation}
Insertion of $p_k^{PCG} = p_k / \delta_k = -B_k \inv g_k / \delta_k$ into (\ref{eq:qNPCG}) gives 
\begin{equation} \label{eq:iffCondOnBk}
B_k C\inv _k B_0 \inv g_k = \frac{1}{\delta_k} g_k.
\end{equation}
Premultiplication by $C_k^{-T}$ while noting that $C_k^{-T} g_k= g_k$ and letting \linebreak$W_k = C_k^{-T} B_k C_k \inv$ yield 
\begin{equation} \label{eq:iffCondOnBk2}
B_k = C_k^T W_k C_k,  \text{\ with\ } W_k B\inv _0 g_k =  \left( 1 / \delta_k\right) g_k, 
\end{equation}
for $W_k$ nonsingular. Finally, it holds that $B_k \succ 0$ if and only if $W_k \succ 0$. For further details, see \cite[Proposition 4]{ForOdl18}.

With the exact linesearch method of Algorithm~\ref{alg:ELSMQP} for
solving (\ref{eq:QP}), parallel search directions imply identical iterates,
and therefore search directions parallel to those of PCG imply finite
termination. Huang has shown that the quasi-Newton Huang class, the Broyden class and PCG generate parallel search directions \cite{Hua70}.

Finally we review a result which is related to the conjugacy of the search directions. Part of the result is similar to those given by Fletcher in \cite{Fle87a}. The result will have a central part the analysis to come. 

\begin{lemma} \label{lemma:conjProp} Consider iteration $k$, $1 \leq k < r$, of the exact linesearch method of Algorithm~$\ref{alg:ELSMQP}$ for solving $(\ref{eq:QP})$. For $B_0 \in \mathcal{S}_+^n$ and $p_i=\delta_i p_i^{PCG}$, $\delta_i \neq 0$, \linebreak $i = 0, \dots, k-1$, then $p_k = \delta_k p_k^{PCG}$, $\delta_k \neq 0$ if and only if 
\begin{equation}
g_i^T p_k = c_k \neq 0, \quad i=0,\dots, k,
\label{eq:conjProp}
\end{equation}
with $c_k = -\delta_k g_k ^T B_0\inv g_k$ and
$p_k\in\textit{span}\left(\{B_0\inv g_0, \dots, B_0\inv g_k \}\right)$.
\end{lemma}
\begin{proof}
  Note that by the assumptions, $g_i$, $i=0,\dots,k$, are identical to those
  generated by PCG. We first show the only-if direction. Premultiplication of $p_k^{PCG}$ in
  (\ref{eq:pPCGexp}) by $g_i^T$ while taking into
  account the conjugacy of the $g_j$'s with respect to $B_0\inv$ gives
  $g_i^T p_k^{PCG}=-g_k^T B_0\inv g_k$, so that $g_i^T (\delta_k
  p_k^{PCG})=c_k$ for $c_k = -\delta_k g_k^T B_0\inv g_k$. In addition,
  (\ref{eq:pPCGexp}) shows that $p_k \in \textit{span}\left( \{B_0\inv g_0,
  \dots, B_0\inv g_k \} \right)$.
To show the other direction, let 
\begin{equation}\label{eq:pIspan}
p_k=\sum_{j=0}^k \gamma_j B_0\inv g_j,
\end{equation}
Premultiplication of (\ref{eq:pIspan}) by $g_i^T$ while taking into
account the conjugacy of the $g_j$'s with respect to $B_0\inv$ gives $
g_i^T p_k = \gamma_i g_i^T B_0\inv g_i,$ $\quad i=0,\dots,k,$ hence if $p_k$ satisfies (\ref{eq:conjProp}) with $c_k = -\delta_k g_k^T B_0\inv g_k$, then it follows that
\begin{equation}\label{eq:gammak}
\gamma_i =-\delta_k  g_k^T B_0\inv g_k/g_i^T B_0\inv g_i, \quad i=0,\dots,k.  
\end{equation}
Insertion of (\ref{eq:gammak}) into (\ref{eq:pIspan}) gives $p_k =
\delta_k p_k^{PCG}$, with $p_k^{PCG}$ given by (\ref{eq:pPCGexp}). 
\end{proof}
For further details on methods of conjugate gradients and related analyses, see e.g. \cite{Pyt2009}.

\section{A compact representation of Broyden class Hessian approximations}
\label{section:CR}
In this section we consider unconstrained optimization problems on the
form (\ref{eq:uncP}) and give a compact representation of the Hessian
approximations in the full Broyden class. The representation contains
only explicit matrices and gradients as vector components. In this
section, the gradient of $f$ in (\ref{eq:uncP}) will be denoted by $g$,
in contrast to all other sections where $g$ refers to the gradient of
the objective function of (\ref{eq:QP}). We first give the general
representation without exact linesearch and then discuss how exact
linesearch is reflected in the representation. 
\begin{lemma} \label{lemma:CRall} Consider iteration $k$ of solving
  $(\ref{eq:uncP})$ by a quasi-Newton method. Let $B_0$ be a given
  nonsingular matrix. Assume that $p_i$, $i=0,\dots, k-1$, has been
  given by $B_i p_i = -g_i$ with $B_i$, $i=1,\dots,k-1$, as any
  nonsingular matrix on the form $(\ref{eq:broydenFam})$. Any Hessian
  approximation in the Broyden class can then be written as
\begin{equation*}
B_k = B_0 +\sum_{i=0}^{k-1} \left[\frac1{g_{i}^T p_i}g_i g_i^T +
  \frac1{\alpha_i
    (g_{i+1}-g_i)^Tp_i}(g_{i+1}-g_i)(g_{i+1}-g_i)^T+\phi_i \omega_i
  \omega_i^T\right]
\end{equation*}
where
\[
  \omega_i = \left(-g_i^T p_i\right)^{1/2}\left(\frac1{
    (g_{i+1}-g_i)^T p_i}(g_{i+1}-g_i) -
  \frac1{g_i^T p_i} g_i \right), 
\]
or equivalently
\begin{equation} \label{eq:cmpRep:outerProdForm}
B_k = B_{0} + G_k T_k G_k^T,
\end{equation}
where $G_k = \begin{bmatrix}
g_{0} & g_1 & \dots & g_{k-1} & g_k 
\end{bmatrix} \in \mathbb{R}^{n \times (k+1)}$ and $T_k \in \mathbb{R}^{(k+1) \times (k+1)}$ is a symmetric tridiagonal matrix on the form $T_k = T_k^{C} + \mathit T_k^{\phi}$, with
\begingroup\makeatletter\def\f@size{10}\check@mathfonts
\def\maketag@@@#1{\hbox{\m@th\normalfont#1}}%
\begin{subequations}
\begin{align}
e_1^T T_k^C e_1 &= \frac1{g_{0}^T p_0} + \frac1{\alpha_0
  (g_{1}-g_0)^Tp_0},\label{eq:Tcfirst} \\ 
e_{i+1}^T T_k^C e_{i+1}  & = \frac1{g_{i}^T p_i} + \frac1{\alpha_{i-1} (g_{i}-g_{i-1})^Tp_{i-1}} \nonumber \\ 
& \quad + \frac1{\alpha_i
  (g_{i+1}-g_i)^Tp_i}, \qquad \qquad  \qquad  \quad \qquad i=1,\dots,k-1, \\ 
  e_{i+1}^T T_k^C e_{i}  &= e_{i}^T T_k^C e_{i+1} = -\frac1{\alpha_{i-1}
  (g_{i}-g_{i-1})^Tp_{i-1}},   \qquad \quad \> i=1,\dots,k, \\ 
    e_{k+1}^T T_k^C e_{k+1} & =  \frac1{\alpha_{k-1}
  (g_{k}-g_{k-1})^Tp_{k-1}}, \label{eq:Tclast} \\ 
e_{1}^T T_k^{\phi} e_{1} & = - \phi_0 g_0^Tp_0 \left( \frac1{\left( g_1 - g_0 \right)^Tp_0} + \frac1{g_0^Tp_0}\right)^2, \label{eq:Tphifirst} \\
e_{i+1}^T T_k^{\phi} e_{i+1}  &=  -\phi_{i-1} g_{i-1}^Tp_{i-1} \left( \frac1{\left( g_i - g_{i-1} \right)^T p_{i-1}} \right)^2  \nonumber \\ 
& \quad -\phi_{i} g_{i}^Tp_{i} \left( \frac1{\left( g_{i+1} - g_{i} \right)^T p_{i}} + \frac1{g_i^Tp_i} \right)^2,  \quad \> i=1,\dots,k-1, \\
e_{i+1}^T T_k^{\phi} e_{i} &=  e_{i}^T T_k^{\phi} e_{i+1} = \phi_{i-1}   \frac{g_{i-1}^T p_{i-1}}{ \left( ( g_i - g_{i-1})^T p_{i-1} \right)^2}, \nonumber \\
& \quad \quad+  \phi_{i-1}\frac1{  \left( g_i - g_{i-1} \right)^T p_{i-1} }, \qquad \qquad \qquad \quad \> \>  i=1,\dots,k, \\ 
e_{k+1}^T T_k^{\phi} e_{k+1} &=   -\phi_{k-1} g_{k-1}^Tp_{k-1} \left( \frac1{\left( g_k - g_{k-1} \right)^T p_{k-1}} \right)^2 .
\label{eq:Tphilast}
\end{align}
\end{subequations}\endgroup
\end{lemma}
\begin{proof}
The result follows directly from telescoping (\ref{eq:broydenFam}) and
writing it on outer product form. 
\end{proof}

The compact representation in Lemma~\ref{lemma:CRall} requires storage of ($k$+1) gradient vectors and an explicit component matrix, $T_k$, of size ($k$+1)$\times$($k$+1). In comparison to compact representations given in \cite{ByrNocSna94}, \cite{ErwMar14} and \cite{ErwMar17} which require storage of $2k$ vector-pairs $\left( B_0 s_i, \> y_i \right)$, $i=0, \dots, k-1$, and an implicit $2k$$\times $$2k$ component matrix. 
An expression for the inverse of the proposed representation (\ref{eq:cmpRep:outerProdForm}) can be obtained with Sherman-Morrison-Woodbury formula \cite{GolVan13}.

One of the most commonly used quasi-Newton update schemes is the BFGS
update. 
For $\phi_i = 0$, $i=1,\dots, k-1$, it follows from Lemma~\ref{lemma:CRall} that
\begin{equation*} 
B_k^{BFGS} = B_0 +\sum_{i=0}^{k-1} \left[{\textstyle\frac1{g_{i}^T p_i}}g_i g_i^T +
  {\textstyle\frac1{\alpha_i
    (g_{i+1}-g_i)^Tp_i}}(g_{i+1}-g_i)(g_{i+1}-g_i)^T\right],
\end{equation*}
or equivalently 
\begin{equation*}
B_k^{BFGS} = B_{0} + G_k T_k^{BFGS} G_k^T,
\end{equation*}
where $G_k = \begin{bmatrix}
g_{0} & g_1 & \dots & g_{k-1} & g_k 
\end{bmatrix} \in \mathbb{R}^{n \times (k+1)}$
and $T_k^{BFGS} \in \mathbb{R}^{(k+1) \times (k+1)}$  is a symmetric tridiagonal matrix with elements given in (\ref{eq:Tcfirst})-(\ref{eq:Tclast}). 

Under exact linesearch the step length is chosen such that
$g_{k}^Tp_{k-1} = 0.$
In consequence the rank-one matrix $ \phi_{k-1} \omega_{k-1}\omega_{k-1}^T$ in (\ref{eq:broydenFam}) reduces to
\begin{equation*} 
 \phi_{k-1} \omega_{k-1} \omega_{k-1}^T =  - \frac{\phi_{k-1}}{g_{k-1}^Tp_{k-1}}   g_kg_k^T.
\end{equation*}
The choice of Broyden member is thus only reflected in the diagonal of $T_k$ in
Lemma~\ref{lemma:CRall}. This can be observed directly in
(\ref{eq:Tphifirst}) - (\ref{eq:Tphilast}) by making use of the exact
linesearch condition $g_i^Tp_{i-1}=0$, $i=1,\dots,k$. All non-diagonal
terms of $T_k^{\phi}$ become zero and the diagonal terms may be
simplified to
\begin{equation*}
e_{i+1}^T T^{\phi}_k e_{i+1} =  
\begin{cases}
0 & i = 0, \\
-\frac{\phi_{i-1}}{g_{i-1}^Tp_{i-1}} & i=1,\dots,k.
\end{cases}
\end{equation*}
Any Hessian approximation in the Broyden class may in fact be written
as \linebreak $B_k = B_k^{BFGS} - \left( \phi_{k-1} / g_{k-1}^Tp_{k-1} \right)
g_kg_k^T$. In consequence, $B_k$ is independent of $\phi_i$ for $i= 0,
\dots, k-2$ and the choice of Broyden member only affects the scaling
of the search direction. This property follows solely from exact
linesearch, in comparison to the properties that stem from exact
linesearch on quadratic optimization problems (\ref{eq:QP}), which are
discussed in Section~\ref{section:QP}. 


\section{Quadratic problems} \label{section:QP}
In this section we consider quadratic problems on the form (\ref{eq:QP}) and start from the requirement that $p_k$ generated by the exact linesearch method of Algorithm~\ref{alg:ELSMQP} shall be parallel to $p_k^{PCG}$. Motivated by the performance of the Broyden class, we start by considering Hessian approximations $B_k=B_{k-1}+U_k$ where $U_k$ is a symmetric rank-two matrix with $\mathcal{R}(U_k) = \textit{span}\left(\{ g_{k-1}, g_k \}\right)$ and thereafter look at generalizations. A characterization of all such update matrices $U_k$ is provided as well as a multi-parameter Hessian approximation that generates $p_k = \delta_k p_k^{PCG}$ for nonzero scalar a $\delta_k$. Thereafter, we consider limited-memory Hessian approximations with this property, discuss potential extensions and how to solve the arising systems with a reduced-Hessian method.
\begin{proposition} \label{prop:charUk} 
Consider iteration $k$, $1\leq k < r$, of the exact linesearch method of Algorithm~$\ref{alg:ELSMQP}$ for solving $(\ref{eq:QP})$. Assume that $p_i=\delta_i p_i^{PCG}$ with $\delta_i \neq 0$ for \linebreak$i= 0, \dots , k-1$, where $p_i^{PCG}$ is the search direction of PCG with associated $B_0 \in \mathcal{S}_+^n$, as stated in $(\ref{eq:pPCG})$. Let $B_{k-1}$ be a nonsingular matrix such that $B_{k-1}p_{k-1}=-g_{k-1}$ and $B_{k-1}B_0^{-1}g_k=g_k$. Moreover, let $U_k = B_k - B_{k-1}$ and assume that $B_k$ and $p_k$ satisfy $B_kp_k = -g_k$ with $B_k$ nonsingular. Then, if $U_k$ is symmetric, rank-two with $\mathcal{R}(U_k) = \textit{span}\big(\{ g_{k-1}, g_k \}\big)$ it holds that $p_k=\delta_k p_k^{PCG}, \delta_k \neq 0$, if and only if
\[
U_k =\frac1{g_{k-1}^T p_{k-1}} g_{k-1} g_{k-1}^T
 +  \rho_{k-1} \left( g_k - g_{k-1} \right)\left( g_k - g_{k-1}
\right)^T + \frac{\left( \frac{1}{\delta_k}-1 \right)}{g_k^T B_0^{-1} g_k} g_{k} g_{k}^T,
\]
where $\rho_{k-1}$ is a free parameter. 
\end{proposition}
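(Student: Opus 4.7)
The plan is to parametrize the general symmetric rank-two update with $\mathcal{R}(U_k)=\textit{span}(\{g_{k-1},g_k\})$ by three scalars, substitute $B_k=B_{k-1}+U_k$ into the Forsgren--Odland equivalent condition from Proposition~\ref{prop:Fo-Od-18-prop4}, and read off the constraints the scalars must satisfy.

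Concretely, any such $U_k$ admits the three-parameter form $U_k = c_1 g_{k-1}g_{k-1}^T + c_2\bigl(g_{k-1}g_k^T + g_k g_{k-1}^T\bigr) + c_3 g_k g_k^T$. By Proposition~\ref{prop:Fo-Od-18-prop4}, the relation $p_k=\delta_k p_k^{PCG}$ is equivalent to $B_k C_k^{-1} B_0^{-1} g_k = (1/\delta_k) g_k$. I would set $v := C_k^{-1} B_0^{-1} g_k = B_0^{-1} g_k + \frac{g_k^T B_0^{-1} g_k}{g_{k-1}^T p_{k-1}}\,p_{k-1}$ and expand $B_k v = (B_{k-1}+U_k)v$ term by term.

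The hypotheses $B_{k-1}B_0^{-1}g_k = g_k$ and $B_{k-1}p_{k-1}=-g_{k-1}$ immediately give $B_{k-1}v = g_k - \frac{g_k^T B_0^{-1} g_k}{g_{k-1}^T p_{k-1}}\,g_{k-1}$. For $U_k v$, the required inner products $g_{k-1}^T v$ and $g_k^T v$ both collapse to the single scalar $g_k^T B_0^{-1} g_k$, using two identities: $g_{k-1}^T B_0^{-1} g_k = 0$ (the $B_0^{-1}$-conjugacy of the PCG gradients, noted immediately after Proposition~\ref{prop:Fo-Od-18-prop4}) and $g_k^T p_{k-1} = 0$ (exact linesearch in Algorithm~\ref{alg:ELSMQP}). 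Hence $U_k v = g_k^T B_0^{-1} g_k \bigl[(c_1+c_2)\,g_{k-1} + (c_2+c_3)\,g_k\bigr]$.

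Equating $B_k v$ with $(1/\delta_k)g_k$ and exploiting linear independence of $g_{k-1}$ and $g_k$ (two nonzero, $B_0^{-1}$-orthogonal vectors) yields the two linear conditions $c_1+c_2 = 1/(g_{k-1}^T p_{k-1})$ and $c_2+c_3 = (1/\delta_k - 1)/(g_k^T B_0^{-1} g_k)$, which leave exactly one free parameter. Setting $\rho_{k-1}:=-c_2$ and regrouping the $\rho_{k-1}$-terms as $\rho_{k-1}(g_k-g_{k-1})(g_k-g_{k-1})^T$ turns the ansatz into the stated expression for $U_k$, giving both directions of the iff simultaneously. The main obstacle is clean bookkeeping in the third step: checking that the conjugacy and exact-linesearch identities really do eliminate all cross terms, so that the characterization collapses to precisely two scalar conditions on three parameters.
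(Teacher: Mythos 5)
Your proposal is correct and follows essentially the same route as the paper: both reduce the claim to the Forsgren--Odland condition $B_kC_k^{-1}B_0^{-1}g_k=(1/\delta_k)g_k$, evaluate $B_{k-1}v$ and $U_kv$ on $v=C_k^{-1}B_0^{-1}g_k$ using $g_{k-1}^TB_0^{-1}g_k=0$ and $g_k^Tp_{k-1}=0$, and identify coefficients of the linearly independent vectors $g_{k-1}$ and $g_k$. The only difference is cosmetic: you parametrize $U_k$ in the basis $\{g_{k-1}g_{k-1}^T,\,g_{k-1}g_k^T+g_kg_{k-1}^T,\,g_kg_k^T\}$ and change variables to $\rho_{k-1}=-c_2$ at the end, whereas the paper writes $U_k$ directly with the $(g_k-g_{k-1})(g_k-g_{k-1})^T$ term and observes that it drops out of the condition.
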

\begin{proof}
As stated in \cite[Proposition 5]{ForOdl18}, the assumptions in the proposition together with (\ref{eq:iffCondOnBk}) and $B_k=B_{k-1}+U_k$ give the following necessary and sufficient condition on $U_k$ such that $p_k=\delta_k p_k^{PCG}$ for a scalar $\delta_k \neq 0$,
\begin{equation} \label{eq:UkCond}
U_k \left(B_0^{-1}g_k + \frac{g_k^T B_0^{-1} g_k}{p_{k-1}^Tg_{k-1}} p_{k-1}\right)  =  \left( \frac{1}{\delta_k}-1 \right)g_k + \frac{g_k^T B_0^{-1} g_k}{p_{k-1}^Tg_{k-1}}g_{k-1}.
\end{equation}
Any symmetric rank-two matrix, $U_k$, with  $\mathcal{R}(U_k) = \textit{span}\big(\{ g_{k-1}, g_k \}\big)$ can be written as
\begin{equation} \label{eq:AllUkQP}
U_k = \begin{pmatrix}
g_{k-1} & g_{k} 
\end{pmatrix} \begin{pmatrix}
\eta_{k-1} +  \rho_{k-1} & - \rho_{k-1} \\ - \rho_{k-1} & \varphi_k + \rho_{k-1}
\end{pmatrix}  \begin{pmatrix}
g_{k-1}^T \\ g_{k}^T 
\end{pmatrix},
\end{equation}
for parameters $\eta_{k-1}$, $\rho_{k-1}$ and
$\varphi_{k}$. Insertion of (\ref{eq:AllUkQP}) into (\ref{eq:UkCond}),
taking into account $g_k^T B_0^{-1} g_{k-1} = 0$ and $g_k^T p_{k-1}=
0$ gives
\[
\varphi_k g_k^T B_0^{-1} g_k g_k + \eta_{k-1} g_k^T B_0^{-1} g_k g_{k-1} = \Big( \frac{1}{\delta_k}-1 \Big)g_k + \frac{g_k^T B_0^{-1} g_k}{p_{k-1}^Tg_{k-1}}g_{k-1},
\]
which is independent of $\rho_{k-1}$. Identification of coefficients
for $g_k$ and $g_{k-1}$ respectively gives
\[
\varphi_{k}  = \Big( \frac{1}{\delta_k}-1 \Big)\frac{1}{g_k^T B_0^{-1}
  g_k}, \quad  \text{ and } \quad  \eta_{k-1}  =  \frac{1}{g_{k-1}^Tp_{k-1}}.
\]
Insertion of $\varphi_{k}$ and $\eta_{k-1}$ into (\ref{eq:AllUkQP})
gives $U_k$ as stated in the lemma, with $\rho_{k-1}$ free. 
\end{proof}

The result in Proposition~\ref{prop:charUk} provides a two-parameter update matrix, $U_k$. If the conditions of Proposition~\ref{prop:charUk} apply then it follows directly from $U_k$ that the iterates satisfy the scaled secant condition (\ref{eq:scaledSecCond}). This can be seen by considering $B_k \alpha_{k-1} p_{k-1}$ with $B_k = B_{k-1} + U_k$ which gives \[
\left( B_{k-1}+U_k \right) \alpha_{k-1} p_{k-1} =  -\rho_{k-1} \alpha_{k-1}  g_{k-1}\T p_{k-1}\left( g_k - g_{k-1} \right).
\] Consequently the characterization in Proposition~\ref{prop:charUk}
provides a class which under exact linesearch is equivalent to the symmetric Huang class. The
scaling in the secant condition does neither affect the search
direction nor its scaling. Utilizing the secant condition sets
the parameter $\rho_{k-1} = -1/ \left( \alpha_{k-1}g_{k-1}^Tp_{k-1}
\right)$, that together with the change of variable 
\begin{equation} \label{eq:1to1}
\varphi_k =
\left( \frac{1}{\delta_k}-1 \right)\frac{1}{g_k^TB_0^{-1}g_k} =
-\frac{\phi_{k-1}}{g_{k-1}^Tp_{k-1}},
\end{equation} gives the exact linesearch
form of the Broyden class matrices in (\ref{eq:broydenFam}). Hence, as
expected, utilizing the secant condition fixates one of the parameters
and gives the Broyden class. 

The result of Proposition~\ref{prop:charUk}
motivates further study of the structure in the corresponding Hessian
approximations.
\begin{lemma}\label{lemma:SymHuangEquiv}
Consider iteration $k$, $1\leq k < r$, of the exact linesearch method of
  Algorithm~$\ref{alg:ELSMQP}$ for solving $(\ref{eq:QP})$. Assume that $B_i p_i=-g_i$, $i=0,\dots,k-1$, where
  $B_0 \in \mathcal{S}_+^n$ and 
\begin{align}
B_i & =  B_{i-1} + \frac1{g_{i-1}^T p_{i-1}} g_{i-1} g_{i-1}^T
  +  \rho_{i-1} \left( g_i - g_{i-1} \right)\left( g_i - g_{i-1}
\right)^T \nonumber \\
& \quad +\varphi_{i} g_{i} g_{i}^T, \qquad \qquad \qquad \qquad \qquad \qquad \qquad i=1, \dots, k,
\label{eq:BhuangFamQP}
\end{align}
with $\rho_{i-1}$ and $\varphi_{i}$ chosen such that $B_i$ is nonsingular. Then $B_k$ takes the form
\begin{align}
B_k  & = B_0 +\sum_{i=0}^{k-1} \left(-\frac1{g_{i}^TB_0\inv g_i}g_i
  g_i^T +
  \rho_i (g_{i+1}-g_i)(g_{i+1}-g_i)^T\right) + \varphi_{k} g_{k} g_{k}^T.
\label{eq:BequivHuangQP} 
\end{align}
\end{lemma}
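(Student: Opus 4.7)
I plan to prove Lemma~\ref{lemma:SymHuangEquiv} by induction on $k$. For the base case $k=1$, the update~(\ref{eq:BhuangFamQP}) reads
\[
B_1 = B_0 + \tfrac{1}{g_0^T p_0}\,g_0 g_0^T + \rho_0(g_1-g_0)(g_1-g_0)^T + \varphi_1 g_1 g_1^T,
\]
and since $B_0 p_0 = -g_0$ gives $p_0 = -B_0^{-1} g_0$ and hence $g_0^T p_0 = -g_0^T B_0^{-1} g_0$, the coefficient of $g_0 g_0^T$ already matches the desired form.

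For the inductive step, assume the formula~(\ref{eq:BequivHuangQP}) holds for $B_{k-1}$ with $\varphi_{k-1}$ attached to $g_{k-1} g_{k-1}^T$, and substitute into the recursion~(\ref{eq:BhuangFamQP}). The $\rho_i$-terms telescope immediately. The only nontrivial step is that the coefficient of $g_{k-1} g_{k-1}^T$ becomes
\[
\varphi_{k-1} + \frac{1}{g_{k-1}^T p_{k-1}},
\]
which must be shown to equal $-1/(g_{k-1}^T B_0^{-1} g_{k-1})$ to match~(\ref{eq:BequivHuangQP}); once this identity is established, the induction closes.

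To get the key identity I would use Proposition~\ref{prop:charUk}, which says that the update~(\ref{eq:BhuangFamQP}) produces $p_{k-1} = \delta_{k-1} p_{k-1}^{PCG}$ with
\[
\varphi_{k-1} = \left(\tfrac{1}{\delta_{k-1}} - 1\right)\tfrac{1}{g_{k-1}^T B_0^{-1} g_{k-1}},
\]
and Lemma~\ref{lemma:conjProp}, which gives $g_{k-1}^T p_{k-1}^{PCG} = -g_{k-1}^T B_0^{-1} g_{k-1}$ and hence $g_{k-1}^T p_{k-1} = -\delta_{k-1}\, g_{k-1}^T B_0^{-1} g_{k-1}$. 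Substituting these into $\varphi_{k-1} + 1/(g_{k-1}^T p_{k-1})$ and simplifying, the $\delta_{k-1}$'s cancel and leave $-1/(g_{k-1}^T B_0^{-1} g_{k-1})$, as required.

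The main obstacle is verifying, inside the induction, the hypotheses needed to apply Proposition~\ref{prop:charUk} at each level — in particular the technical condition $B_{i-1} B_0^{-1} g_i = g_i$. This will be handled by observing that under the inductive assumption $p_j = \delta_j p_j^{PCG}$ for $j < i$, the iterates $x_j$ (and thus $g_j$) coincide with the PCG iterates, so the gradients are mutually $B_0^{-1}$-conjugate. Since the inductive form of $B_{i-1}$ only involves outer products in $g_0,\dots,g_{i-1}$ and these are $B_0^{-1}$-orthogonal to $g_i$, the action of $B_{i-1} B_0^{-1}$ on $g_i$ collapses to $g_i$. With this verification in place the induction proceeds cleanly, and the statement of the lemma follows.
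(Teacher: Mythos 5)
Your proposal is correct and follows essentially the same route as the paper: both rest on Proposition~\ref{prop:charUk} to get $p_i=\delta_i p_i^{PCG}$ with $\delta_i=1/(1+\varphi_i g_i^TB_0^{-1}g_i)$, use $g_i^Tp_i^{PCG}=-g_i^TB_0^{-1}g_i$ to obtain the key identity $\tfrac{1}{g_i^Tp_i}+\varphi_i=-\tfrac{1}{g_i^TB_0^{-1}g_i}$, and then telescope the recursion. The only difference is presentational — you phrase the telescoping as a formal induction and explicitly check the hypothesis $B_{i-1}B_0^{-1}g_i=g_i$ via the $B_0^{-1}$-conjugacy of the gradients, a step the paper leaves implicit.
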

\begin{proof}
With the assumptions in the proposition, the update of (\ref{eq:BhuangFamQP}) satisfies the requirements of Proposition~\ref{prop:charUk} and hence for each $i$, $i = 0,\dots, k-1$, it follows that $p_i = \delta_i p_i^{PCG}$ where $\delta_i = 1/\left( 1+\varphi_{i} g_{i}^T B_0\inv g_{i} \right)$ and $\varphi_0 = 0$. Premultiplication of $p_i = \delta_i p_i^{PCG}$ by $g_i^T $ gives
\begin{equation}
g_i^T p_i = \frac{1}{1+\varphi_i g_i^T B\inv _0  g_i} g_i^T p_i^{PCG}, \qquad i = 0, \dots, k-1,
\label{eq:gTpPARAgTpcg}
\end{equation}
Inverting (\ref{eq:gTpPARAgTpcg}) and taking into account that $g_i^T p_i^{PCG} = -g_i^T B\inv _0 g_i$, $i=0,\dots, k-1$, gives 
\begin{equation}\label{eq:gTpPphi}
\frac1{g_{i}^T p_{i}} + \varphi_{i} = -\frac1{g_{i}\T B_0\inv g_{i}},  \qquad i=0,\dots, k-1.
\end{equation}
By telescoping (\ref{eq:BhuangFamQP}) at iteration $k$ we obtain
\begin{align}
B_k & = B_0 +\sum_{i=0}^{k-1} \left[ \left( \frac1{g_{i}^T p_i} + \varphi_i \right) g_i
  g_i^T +
  \rho_i (g_{i+1}-g_i)(g_{i+1}-g_i)^T\right] \nonumber \\ 
  & \quad + \varphi_{k} g_{k} g_{k}^T.
\label{eq:teleHuangQP}
\end{align}
Insertion of (\ref{eq:gTpPphi}) into (\ref{eq:teleHuangQP}) gives (\ref{eq:BequivHuangQP}). 
\end{proof}
Lemma~\ref{lemma:SymHuangEquiv} and (\ref{eq:BequivHuangQP}) show that if $p_k$ is given by (\ref{eq:QN}) with $B_k$ as in (\ref{eq:BequivHuangQP}), then $p_k$ is independent of all $\rho_{i}$, $i = 0, \dots, k-1$, as long as $B_k$ is nonsingular. This result is formalized in the following proposition. 

\begin{proposition}\label{prop:genHuangQP}
  Consider iteration $k$, $1\leq k < r$, of the exact linesearch method of Algorithm~$\ref{alg:ELSMQP}$ for solving $(\ref{eq:QP})$. Assume that
  $p_i=\delta_i p_i^{PCG}$ with $\delta_i \neq 0$ for \linebreak $i= 0, \dots ,
  k-1$, where $p_i^{PCG}$ is the search directions
  of PCG with associated $B_0 \in \mathcal{S}^n_+$, as stated in
  $(\ref{eq:pPCG})$. Let $p_k$ satisfy $B_kp_k = -g_k$ where
\begin{align}
B_k & = B_0 +\sum_{i=0}^{k-1} \left(\frac{-1}{g_{i}^TB_0\inv g_i}g_i
  g_i^T +
  \rho_i^{(k)} (g_{i+1}-g_i)(g_{i+1}-g_i)^T\right) 
+ \varphi_{k} g_{k} g_{k}^T, 
\label{eq:BMuP}
\end{align}
with $\rho_i^{(k)}$, $i=0,\dots,k-1$, and $\varphi_{k}$ chosen such
that $B_k$ is nonsingular. Then,
\[
p_k=\frac1{1+\varphi_{k} g_{k}^T B_0\inv g_{k}}
p_k^{PCG}.
\]
In particular, if  $\rho_i^{(k)} > 0$, $i=0,\dots,k-1$, and $\varphi_{k}>-1/(g_{k}^T B_0\inv g_{k})$, then $B_k\succ0$.
\end{proposition}
\begin{proof}
From Proposition~\ref{prop:charUk} and Lemma~\ref{lemma:SymHuangEquiv} it follows that $B_k$ given by (\ref{eq:BequivHuangQP}) generates $p_k = \delta_k p_k^{PCG}$ where $\delta_k = 1/ \left(1+\varphi_{k} g_{k}^T B_0\inv g_{k}\right)$ and hence satisfies
\[
(g_{i+1}-g_i)^T p_k = 0, \qquad i=0,\dots, k-1,
\]
by Lemma~\ref{lemma:conjProp}. If $\rho_i$, $i=0,\dots,k-1$, and $\varphi_{k}$ chosen such that $B_k$ is nonsingular then the solution is unique and independent of $\rho_i$, $i=0,\dots, k-1$, and thus $\rho_i = \rho_i^{(k)}$, $i=0,\dots, k-1$. Moreover, if $\rho_i^{(k)} > 0$, $i=0,\dots,k-1$ and $\varphi_k = 0$ then the matrix of (\ref{eq:BMuP}) is positive definite by Lemma~\ref{Ap:lemma:Bposdef}. It then follows from Lemma~\ref{Ap:lemma:Axb} that $B_k\succ 0$ for $\varphi_{k}>-1/(g_{k}^T B_0\inv g_{k})$. 
\end{proof}
The result in Proposition~\ref{prop:genHuangQP} together with the exact linesearch method of Algorithm~\ref{alg:ELSMQP} provide a multiple-parameter class that generates search directions parallel to those of PCG. In the framework of updates on the form $B_k = B_{k-1}+U_k$ this class allows update matrices with $\mathcal{R}(U_k) = \textit{span}\left(\{ g_{0},\dots, g_k \}\right)$ and reduces to the symmetric Huang class if $\mathcal{R}(U_k) = \textit{span}\left(\{ g_{k-1}, g_k \}\right)$ is required. The Hessian approximation in (\ref{eq:BMuP}) of Proposition~\ref{prop:genHuangQP} can also be viewed as a matrix composed of three parts
\begin{equation} \label{eq:Bk3parts}
B_k = B_0 + V_k^{\varphi} +  V_k^{\rho},
\end{equation}
where
\begin{equation*}
V_k^{\varphi} = \sum_{i=0}^{k-1} \frac{-1}{g_{i}^TB_0\inv g_i}g_i g_i^T + \varphi_k g_k g_k^T,
\end{equation*}
 and
 \begin{equation} \label{eq:Vrho}
  V_k^{\rho} = \sum_{i=0}^{k-1} \rho_i^{(k)} (g_{i+1}-g_i)(g_{i+1}-g_i)^T.
 \end{equation}
 The direction is determined solely by $B_0 + V_k^{\varphi}$, compare
 with (\ref{eq:pPCGexp}). The parameter $\varphi_k$ gives a scaling
 and the parameters $\rho_{i}^{(k)}$, $i = 0, \dots, k-1$, have no
 effect on the direction. Certain choices of these parameters merely
 guarantee nonsingularity and may provide numerical
 stability. 
\subsection{Limited-memory Hessian approximations}
In this section we extend the above discussion to limited-memory
Hessian approximations. The goal is to obtain approximations such that
(\ref{eq:QN})  gives search directions parallel to those of PCG. From
(\ref{eq:pPCGexp}) it follows directly that $p_k^{PCG} \in \textit{span}(\{ B\inv _0 g_0, \dots, B\inv _0 g_k\})$ and that $p_k^{PCG}$ has a nonzero component in every direction $B\inv _0 g_i$, $i=0,\dots, k$. In consequence, gradient information can not be discarded if $B_k$ is on the form of (\ref{eq:Bk3parts}) where $\mathcal{R}(V_k^{\varphi} +  V_k^{\rho} ) = \textit{span}(\{ g_0, \dots, g_k\})$.  As long as $B_k$ remains nonsingular, gradient information can be discarded from $V_k^\rho$ but not from the part essential for the direction, namely $V_k^{\varphi}$. 
However, parallel directions can be generated if a specific correction term is added to the right hand side of (\ref{eq:QN}), as is shown in Appendix, Theorem~\ref{thm:LMCgrad}. It can also be done as e.g.~in \cite{ByrNocSna94}, by at each iteration $k$ recalculating the basis vectors from the $m$ latest vector pairs $\left( s_i,\> y_i\right)$, $i = k-m,\dots, k-1$. 

In light of the above, the discussion will now be extended to consider Hessian approximations on the form $B_k = B_0 + V_k + V_k^{\rho}$ where $V_k$ is not restricted to have an outer-product form consisting of only gradients. Theorem~\ref{thm:LMQN} below provides conditions on $V_k$ and $V_k^{\rho}$ such that $B_k$ is positive definite and generates directions parallel to those of PCG.  
\begin{theorem} \label{thm:LMQN} Consider iteration $k$, $1\leq k <
  r$, of the exact linesearch method of Algorithm~$\ref{alg:ELSMQP}$
  for solving $(\ref{eq:QP})$. Assume that $p_i=\delta_i p_i^{PCG}$
  with $\delta_i \neq 0$ for \linebreak $i= 0, \dots , k-1$, where $p_i^{PCG}$ is
  the search direction of PCG with associated $B_0 \in
  \mathcal{S}_+^n$, as stated in $(\ref{eq:pPCG})$. Let $p_k$ satisfy
  $B_kp_k = -g_k$ with $B_k = B_0 + V_k + V_k^\rho$ where $V_k^\rho$
  is defined by $(\ref{eq:Vrho})$ and $V_k$ is symmetric such that
  $B_0 + V_k \succeq 0$. If, in addition, $\rho_i^{(k)} \geq 0$,
$i = 0, \dots, k-1$, and $\mathcal{N}(B_0 + V_k) \cap
\mathcal{N}(V_k^\rho) = \emptyset$, then  $B_k \succ 0$, and, for
$\delta_k\ne 0$, it holds that $p_k = \delta_k p_k^{PCG}$ if and only if
\begin{equation}\label{eq:VkCond}
  V_k \left(B_0^{-1}g_k + \frac{g_k^T B_0^{-1} g_k}{p_{k-1}^Tg_{k-1}} p_{k-1}\right)  =  \left( \frac{1}{\delta_k}-1 \right)g_k - \frac{g_k^T B_0^{-1} g_k}{p_{k-1}^Tg_{k-1}}B_0 p_{k-1}.
\end{equation}
In particular, $(\ref{eq:VkCond})$ with $\delta_k=1$, $B_0 + V_k \succeq 0$ and $\mathcal{N}(B_0 + V_k) \cap
\mathcal{N}(V_k^\rho) =\emptyset$ are satisfied for 
\begin{equation}\label{eq:symPCG}
V_k = C_k^T B_0 C_k - B_0, \text{with $C_k$ defined as in $(\ref{eq:Ck})$,} 
\end{equation}
 and
\begin{equation} \label{eq:Vsr1}
V_k = - \frac{1}{p_{k-1}^T B_0 p_{k-1}} B_0 p_{k-1} p_{k-1}^T B_0,  \text{with $\rho^{(k)}_{k-1}>0$}. 
\end{equation}
\end{theorem}
\begin{proof}
  The parameters $\rho_i^{(k)} \geq 0$, $i=0,\dots, k-1$ give
  $V_k^\rho \succeq 0$. It then follows from $\mathcal{N}(B_0 + V_k)
  \cap \mathcal{N}(V_k^\rho) = \emptyset$ that $B_k \succ
  0$. Insertion of $B_k=B_0+V_k+V_k^\rho$ into (\ref{eq:iffCondOnBk}) gives the if and only if condition
  \begin{equation} \label{eq:thm:proofcond1}
  (B_0+V_k+V_k^\rho)
\left(B_0^{-1}g_k + \frac{g_k^T B_0^{-1} g_k}{p_{k-1}^Tg_{k-1}}
  p_{k-1}\right)  =  
\frac{1}{\delta_k}g_k,
  \end{equation}
 for $p_k = \delta_k p_k^{PCG}$, $\delta_k \neq 0$. Using the identity
\[
V_k^\rho
\left(B_0^{-1}g_k + \frac{g_k^T B_0^{-1} g_k}{p_{k-1}^Tg_{k-1}}
  p_{k-1}\right)  =  0,
\]
which follows from Lemma~\ref{lemma:conjProp}, in (\ref{eq:thm:proofcond1}) and moving terms corresponding to $B_0$ to the right-hand side gives if and only if condition (\ref{eq:VkCond}) on $V_k$. In particular, with $B_0+V_k=C_k^T B_0 C_k$, as given by
(\ref{eq:symPCG}), letting $W_k=B_0$ in (\ref{eq:iffCondOnBk2}) gives
$p_k=p_k^{PCG}$. In addition, since $C_k$ is nonsingular and $B_0\succ
0$, it follows that $B_0+V_k = C_k^T B_0 C_k\succ 0$, so that $B_0+V_k+V_k^\rho
\succ 0$ for $\rho_i^{(k)} \geq 0$, $i=0,\dots, k-1$. The null-space of $B_0+V_k$ with $V_k$ as in (\ref{eq:Vsr1}) is
one-dimensional and spanned by $p_{k-1}$. Since $B_0\succ 0$, we have
$B_0+V_k\succeq 0$. In addition, if $\rho^{(k)}_{k-1}> 0$, then
\[
V_k^\rho p_{k-1}=\rho^{(k)}_{k-1}(g_k-g_{k-1})\T p_{k-1} (g_k-g_{k-1}) = 
-\rho^{(k)}_{k-1} g_{k-1}^T p_{k-1} (g_k-g_{k-1}) \ne 0,
\]
since $g_{k-1}^T p_{k-1}^{PCG} < 0$ and $p_{k-1}=\delta_{k-1}
p_{k-1}^{PCG}$, with $\delta_{k-1}\ne 0$, by assumption. Therefore,
$B_0+V_k+V_k^\rho \succ 0$ if $\rho^{(k)}_{k-1}> 0$ and $\rho_i^{(k)}
\geq 0$, $i=0,\dots, k-2$. Finally, $V_k$ of (\ref{eq:Vsr1})
satisfies (\ref{eq:VkCond}) for $\delta_k=1$ since $g_k^T p_{k-1}=0$,
as required. 
\end{proof}
The result in Theorem~\ref{thm:LMQN} provides a class of
multi-parameter limited-memory Hessian approximations where the memory
usage can be changed between iterations. The choices of $V_k$ in
(\ref{eq:symPCG}) and (\ref{eq:Vsr1}) are merely two examples of
members in the class. The matrix $V_k$ of (\ref{eq:symPCG})
is of rank-two, and if used in $B_k= B_0 + V_k + V_k^{\rho}$, with $\rho_i^{(k)}=0$,
$i=0,\dots, k-1$, then $B_kp_k = -g_k$ may viewed as a symmetric PCG update, compare with (\ref{eq:qNPCG}). The matrix $V_k$ of (\ref{eq:Vsr1}) is the matrix of least rank that satisfies (\ref{eq:VkCond}) with $\delta_k = 1$. In general, there is little restriction on which iterations to include information from in $V_k^\rho$. Information can thus be included from iterations that are believed to be of importance. All information may also be expressed in terms of search directions and the current gradient $g_k$. This provides the ability to reduce the amount of storage when the arising systems are solved by reduced-Hessian methods, described in Section~\ref{subsec:SolvDsyst}, with search directions in the basis.
\subsection{Solving the systems} \label{subsec:SolvDsyst} 
In this section we discuss solving systems of linear equations using
reduced-Hessian methods. These methods provide an alternative
procedure for solving systems arising in quasi-Newton methods. We
follow Gill and Leonard \cite{GilLeo01,GilLeo03} and refer to their
work for a thorough introduction.

Assume that a Hessian approximation of Theorem~\ref{thm:LMQN} is given and used together with the exact linesearch method of Algorithm~$\ref{alg:ELSMQP}$ for solving $(\ref{eq:QP})$. The search direction at iteration $k$ then satisfies $p_k=\delta_k p_k^{PCG}$ for a scalar $\delta_k$ and hence by (\ref{eq:pPCG}) $p_k\in\textit{span}\left( \{ p_{k-1}, B\inv _0 g_k \} \right)$. Define $\Psi^{\min}_k = \{ p_{k-1}, B\inv _0 g_k \}$ and let $\Psi_k$ be a subspace such that $ \Psi^{\min}_k \subseteq \Psi_k$. Furthermore let $Q_k$ be a matrix whose columns span $\Psi_k$ and $Z_k$ be the matrix whose columns are the vectors obtained from the Gram-Schmidt process on the columns of $Q_k$. The search direction can then be written as $p_k = Z_k u_k$ for some vector $u_k$. Premultiplication of (\ref{eq:QN}) by $Z_k^T$ together with $p_k = Z_k u_k$ gives
\begin{equation} \label{eq:redHess}
Z_k^T B_k Z_k u_k = -Z_k^T g_k,
\end{equation}
which has a unique solution if $B_k$ is positive definite. Hence $p_k = Z_k u_k$ where $u_k$ satisfies (\ref{eq:redHess}). Note that the analogous procedure is also applicable for the result in Appendix, Theorem~\ref{thm:LMCgrad} where the Hessian approximation is given by (\ref{eq:BgenLC}) and $p_k$ is generated by $B_k p_k = -N_k g_k$. The minimal space required is $\Psi_k = \Psi^{\min}_k$ but other feasible choices are for example $\Psi_k=\{ B\inv _0 g_0, \dots, B\inv _0 g_k\}$, by (\ref{eq:pPCGexp}), or $\Psi_k=\{p_{t-1},B\inv _0g_t,\dots,B\inv _0g_k\}$ where $0 < t < k$. 
\subsection{Construction of methods and complexity} \label{subsec:methodsAndComplexity}
The Hessian approximations proposed in Theorem~\ref{thm:LMQN} combined with the reduced-Hessian framework of Section~\ref{subsec:SolvDsyst} provide freedom in the construction of limited-memory quasi-Newton methods. A summary of the quantities that can be chosen is shown in Table~\ref{table:MethodPar} below.
  \begin{table}[H]
      \centering
      \caption{Variable quantities at iteration $k$.}
      \begin{footnotesize}
\begin{tabular}{c|l} \label{table:MethodPar}
Quantity & Description  \\ \hline\noalign{\smallskip}
$V_k$ & Must satisfy the conditions of Theorem~\ref{thm:LMQN}.  \\
$\rho_i^{(k)}$,  $i=0,\dots, k-1$ & Parameter values.   \\
$m_k$ & $\#$ non-zero $\rho_i^{(k)}$, $i=0,\dots, k-1$, of $V_k^{\rho}$ in (\ref{eq:Vrho}).  \\
$Q_k $ & Provides the space for $p_k$, columns must span $ \Psi_k^{\min} $.\\
$\hat{m}_k$ &  $\#$  columns of $Q_k$ and dimension of the reduced-Hessian.
      \end{tabular}
            \end{footnotesize}
\end{table} 
\noindent
The complexity of the method is essentially determined by the construction and solution of (\ref{eq:redHess}). Each iteration $k$ requires a Gram-Schmidt process as well as the construction and factorization of the matrix $Z_k^T \left( B_0 + V_k + V_k^{\rho} \right) Z_k$. If no information is re-used this gives the worst case complexity 
\[
O(n \hat{m}_k^2 + \left[ ( n^2 \hat{m}_k + n \hat{m}_k^2) +
  \mbox{rank}(V_k) (n \hat{m}_k + \hat{m}_k^2) + (nm \hat{m}_k + m \hat{m}_k^2) \right] + \hat{m}_k^3), 
\]
where constants have been omitted. However, the overall complexity can be reduced with particular choices of the quantities in Table~\ref{table:MethodPar}. We will demonstrate this by constructing two relatively simple quasi-Newton methods with fixed limited-memory $m=\hat{m}>3$ for which numerical results are given in Section~\ref{section:numRes}. The methods will be denoted by \texttt{symPCGs} and  \texttt{Vsr1} for which $V_k$ is given by (\ref{eq:symPCG}) of Theorem~\ref{thm:LMQN} and (\ref{eq:Vsr1}) respectively. Both use Hessian approximations on the form $B_k = B_0 + V_k + V_k^{\rho}$ with $V_k^{\rho}$ as in (\ref{eq:Vrho}) and parameters defined by
\[\rho_i^{(k)} = \begin{cases} \rho_i^B &  i = 0, \dots, m-4, k-3, k-2, k-1, \\
0 & i = m-5, \dots, k-4, \end{cases} \]
where $\rho_i^B$, $i = 0, \dots, k-1$, are the quantities which corresponds to the secant condition. The basis is computed from \[ Q_k = \begin{pmatrix}
p_0 & \dots & p_{m-4} & p_{k-2} & p_{k-1} & B\inv _0 g_k
\end{pmatrix}.\] Hence systems of size at most $m$ has to be solved at every iteration with information from the ($m$--3)-first and the $3$-latest iterations for $k>m$. In consequence part of the matrices $Z_k$ and $V_k^{\rho}$ stay constant and the reduced-Hessian may be updated instead of recomputed at every iteration. The computational complexity is then dominated by $\max\{ n^2, m^3 \}$, see Appendix for motivation. The choice $m = n^{2/3}$ gives complexity $n^2$ which is the same as that of PCG with exact linesearch.
 
\section{Numerical results} \label{section:numRes} In this section we
first show the convergence behavior of quasi-Newton-like methods and
PCG for randomly generated quadratic optimization problems.
Thereafter we give performance profiles \cite{DolMor2002} for the two
limited-memory methods described in
Section~\ref{subsec:methodsAndComplexity}. As mentioned in
Section~\ref{subsec:methodsAndComplexity}, these methods are referred
to as \texttt{symPCGs} and \texttt{Vsr1} respectively. These are also
compared to our own Matlab implementations of PCG, with search
direction given by (\ref{eq:pPCG}), BFGS, with search direction given
by (\ref{eq:broydenFamys}) with $\phi_{k-1}=0$ for all $k$, and L-BFGS
with search direction as proposed by Nocedal in \cite{Noc80n}. All
methods use exact linesearch, as defined by
Algorithm~\ref{alg:ELSMQP}, with their particular search direction
update. We refer to our implementation of these methods as PCG, BFGS
and L-BFGS respectively. The performance profiles are in terms of
number of iterations for solving systems of linear equations which
originate from the CUTEst test collection \cite{GouOrbToi15}. The
benchmark problems were initially processed using the \texttt{Julia}
packages \texttt{CUTEst.jl} and \texttt{NLPmodels.jl} by Orban and
Siqueira \cite{OrbSigSmoothOpt2019}.

The purpose of the first part is partly to illustrate the difference between quasi-Newton-like methods and PCG to give an indication of the round-off error effects. Convergence for a member in the class of quasi-Newton Hessian approximations in (\ref{eq:BMuP}) of Proposition~\ref{prop:genHuangQP}, here denoted by \texttt{MuP}, is shown in Figure~\ref{fig:MuPwPCGexact}. The figure also contains the convergence of the BFGS method and PCG in both finite and exact arithmetic, all with $B_0 = I$. In this study we consider exact arithmetic PCG as the original but with 512 digits precision. The parameters of (\ref{eq:BMuP}) were chosen as follows, $\varphi_k= 0$ for all $k$ and \[
\rho_i^{(k)} = \xi_i^{(k)} \rho_i^{B}, \qquad i = 0, \dots, k-1,
\]
where $\xi_i^{(k)}$, $i = 0, \dots, k-1$, are normally distributed random variables and $ \rho_i^B$, $i = 0, \dots, k-1$, are the quantities corresponding to the secant condition. Note that the scaling of $\rho_i^{(k)}$, $i= 0, \dots, k-1$, are randomly changed for every $k$.
\begin{figure}[H]
    \centering
    \includegraphics[width=0.537\textwidth]{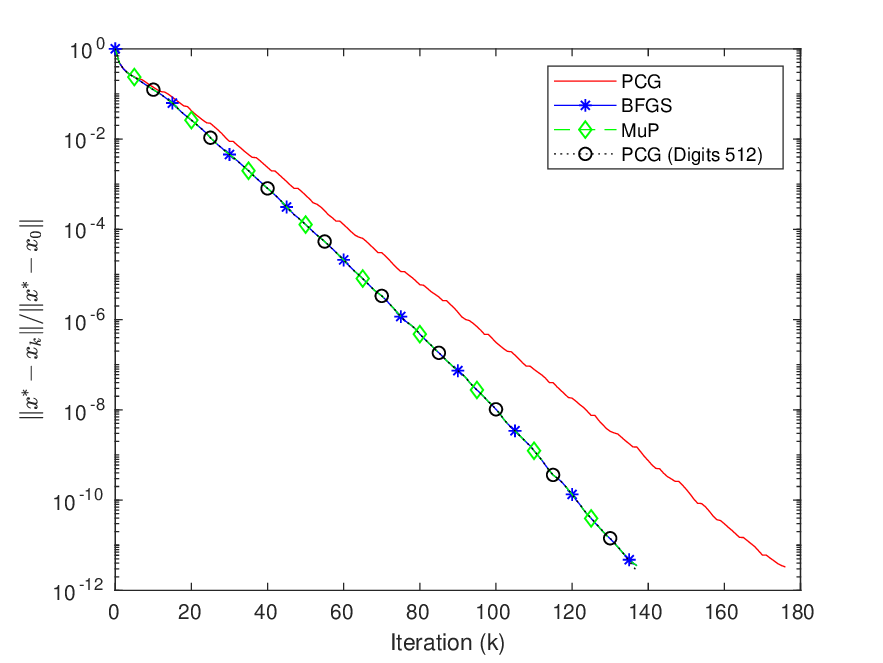}
    \caption{Convergence for solving a randomly generated quadratic problem with $300$ variables and condition number in the order of $10^4$. The convergence corresponds to representative results based on 100 simulations with parameters $\xi_i^{(k)} \in \left[ 10^{-1}, 10^8 \right]$, $i = 0, \dots, k-1$. } \label{fig:MuPwPCGexact}
\end{figure}
\noindent 
The three methods compared in Figure~\ref{fig:MuPwPCGexact} are with the
exact linesearch method of Algorithm~\ref{alg:ELSMQP} equivalent in
exact arithmetic on
(\ref{eq:QP}). However, in finite arithmetic this is not the case. As
can be seen in the figure, PCG suffers from round-off errors while
BFGS behaves like the exact arithmetic PCG. The maximum error from all
simulations between the iterates of BFGS and exact PCG was $ 5.1 \cdot
10^{-14}$, i.e. \[ \max\limits_{i} \| x_i^{BFGS} - x_i^{PCG}\| = 5.1
\cdot 10^{-14}. \] Consequently, the BFGS method does not suffer from
round-off errors on these randomly generated quadratic problems. By the
result of Proposition~\ref{prop:genHuangQP} it is not required to fix
the parameters $\rho_i^{(k)}$, $i=0,\dots, k-1$, and as
Figure~\ref{fig:MuPwPCGexact} shows there is an interval where this
result also holds in finite arithmetic. The secant condition is
expected to provide an appropriate scaling of the quantities since it
gives the true Hessian in $n$ iterations. Our results indicate that
there is no particular benefit for the quadratic case to choose the
values given by the secant condition. This freedom may be useful, since
values of $\rho_i^{(k)}$ close to zero for some $i$ may make the
Hessian approximation close to singular, and such values could
potentially be avoided.

The purpose of the next part is to give a first indication of the
performance of the proposed class of limited-memory quasi-Newton
methods for solving a sequence of related systems of linear
equations. The sequences of systems were generated by considering
unconstrained nonlinear CUTEst problems of the order $10^3$. For such
a problem, a sequence of matrix and right-hand side pairs
$\nabla^2f(x_j)$ and -$\nabla f(x_j)$, \linebreak$j=1,\dots,J$, was accepted if
an initial point $x_0$ for which Newton's method with unit step
converged to a point $x_J$ such that $\| \nabla f(x_J) \| \leq
10^{-6}$ and \linebreak$\lambda_{min}\left( \nabla^2 f(x_J) \right)~>~10^{-6}$. In addition, it was required for each $j$ that \linebreak $\nabla^2
f(x_j) d_j = - \nabla f(x_j)$, was solvable with accuracy at least
$10^{-7}$ by the built-in solver in \texttt{Julia} and that $\nabla^2
f(x_j)$ was positive definite.  These
conditions reduced the test set to $21$ problems giving $21$ sequences
of linear equations, corresponding to $220$ systems of linear
equations in total. The performance measure in terms of number of
iterations is defined as follows. Let $N_{p,s}$ be the number of
iterations required by method $s \in \mathcal{S}$ on problem $p \in
\mathcal{P}$. If the method failed to converge within a maximum number
of iterations this value is defined as infinity. The measure
$P_s(\tau)$ is defined as \begin{equation*} P_s(\tau) = \frac{| \{ p
    \in \mathcal{P} : r_{p,s} \leq \tau \} |}{ | \mathcal{P} |},
  \text{ where } r_{p,s} = \frac{N_{p,s}}{\min\{\ N_{p,s} : s \in
    \mathcal{S} \}},
\end{equation*}
Performance profiles are shown in Figure~\ref{fig:PPwPC} for \texttt{SymPCGs}, \texttt{Vsr1}, PCG, BFGS, and L-BFGS. All with $B_0 = \nabla^2 f(x_{j-1})$, i.e. when the Newton system at iteration $j$ is preconditioned with the previous Hessian. The figure contains results for \linebreak $m = [n^\frac{1}{2}, \> n^\frac{2}{3}, \> n^\frac{21}{30}]$ and two different tolerances in the stopping criterion. 
The first corresponds to the criterion $\| g_k \| \leq 10^{-7}$ , namely when an approximate solution is desired. The other corresponds to when a more accurate solution is desired and the criterion $\| g_k \| \leq \max \{\epsilon^{DS}, 10^{-13}\}$, where $ \epsilon^{DS}$ is the solution accuracy of the built-in direct solver on the preconditioned Newton system with preconditioner $B_0$. The two stopping criteria will be referred to as low and high accuracy respectively in Table~\ref{table:aveNumIt}-\ref{table:aveSizeSolv}. For the two first $m$-values the overall computational complexity is $O(n^2)$ whereas for the third it is $O(n^\frac{21}{10})$, i.e. a case where more computational work is allowed if needed. In Table~\ref{table:aveSizeSolv} we show a measure of the mean computational work done for solving systems on the form (\ref{eq:redHess}) per iteration. The measure is in terms of the average size of the reduced-Hessian relative to $m$. Moreover, the average number of iterations are shown in Table~\ref{table:aveNumIt}. The maximum number of iterations was set to $5n$ in all simulations.

We also give performance profiles when no regard is taken to the fact that the systems of linear equations within a sequence are related, namely when $B_0=I$. The corresponding performance profiles and averaged quantities are shown in Figure~\ref{fig:PPwoPC} and Table~\ref{table:aveNumIt}-\ref{table:aveSizeSolv} respectively.

\begin{figure}[H]
\parbox{5.9cm}{
\includegraphics[width=6.5cm]{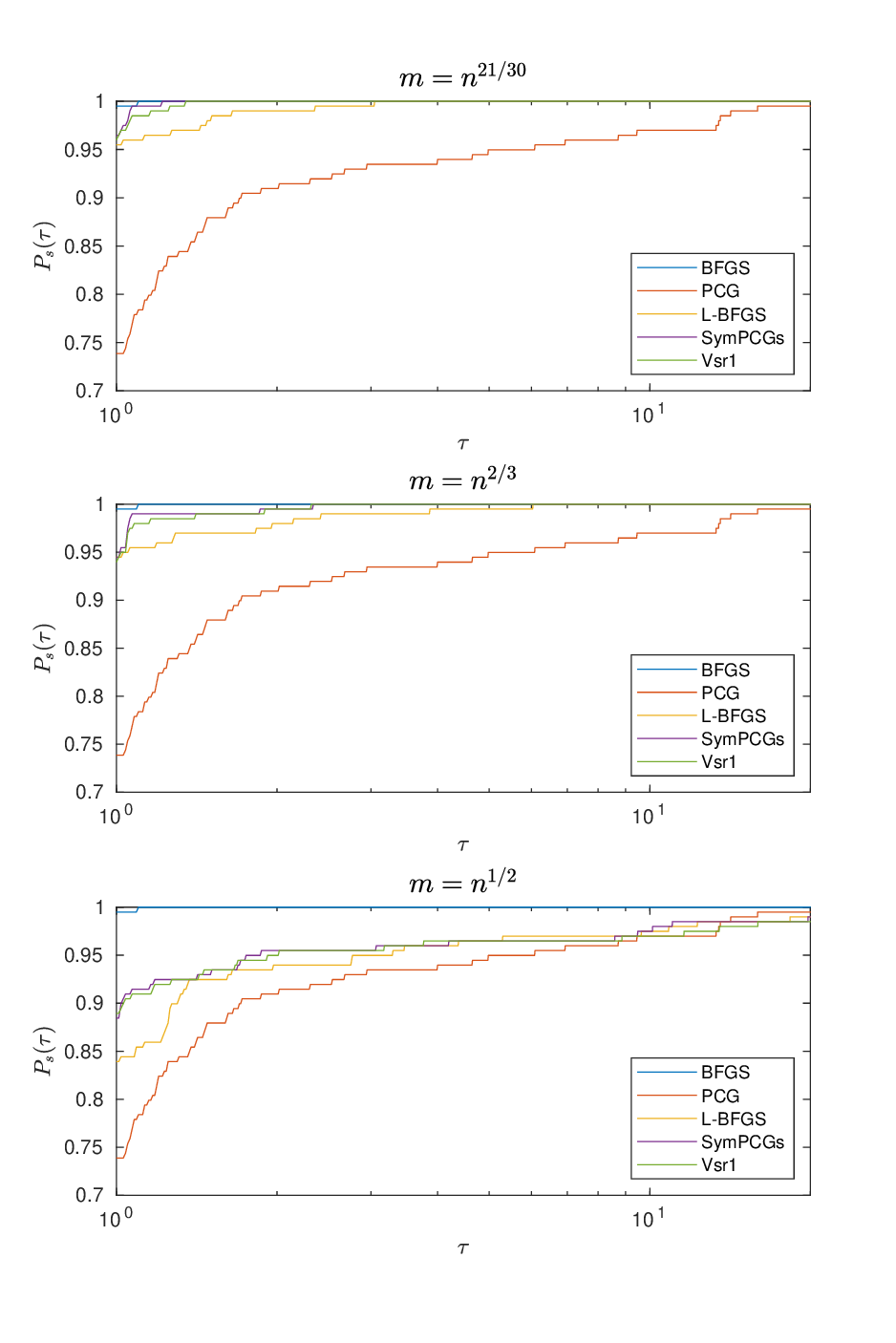}}
\begin{minipage}{5.7cm}
\includegraphics[width=6.5cm]{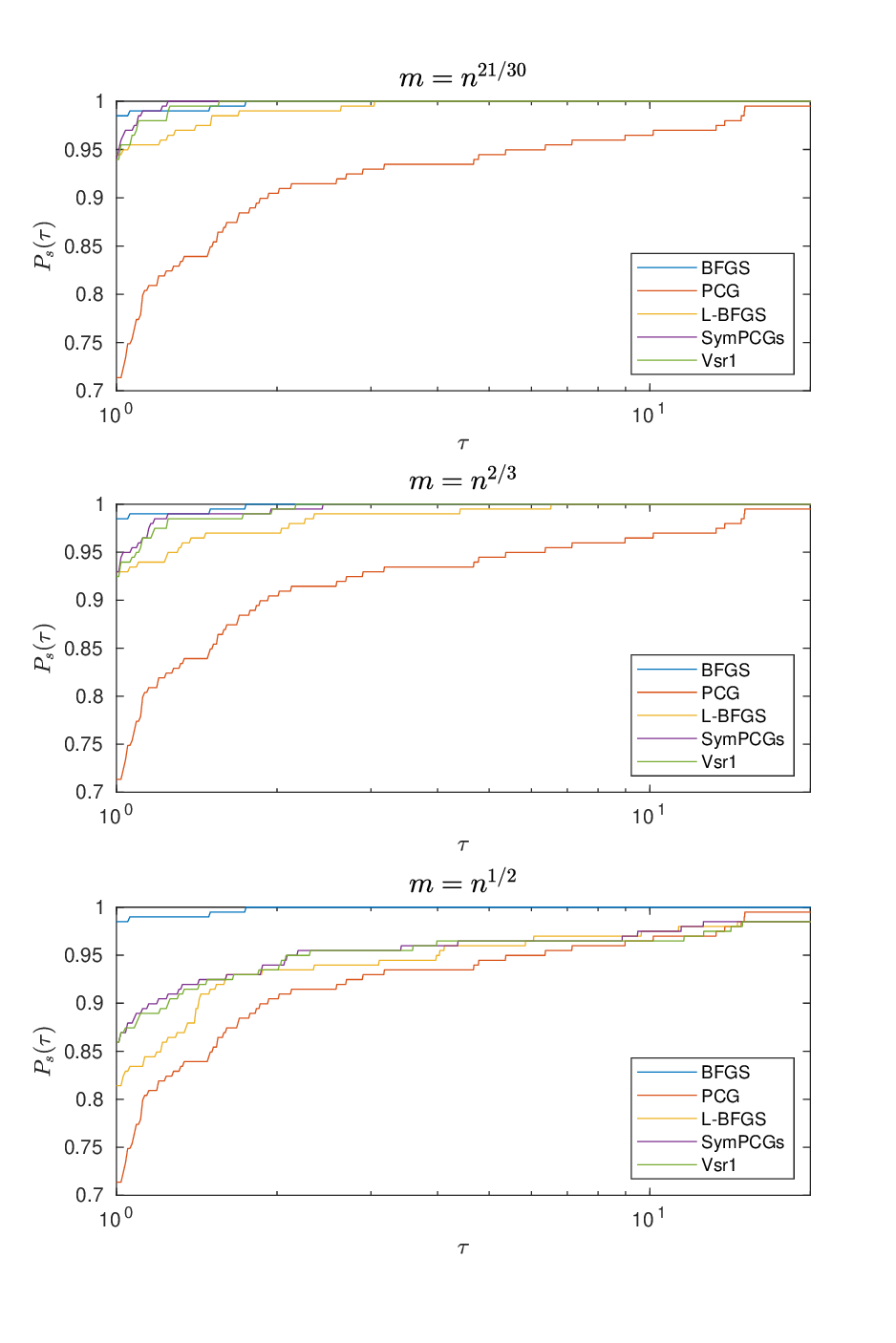} 
\end{minipage}
\caption{Performance profiles with $B_0 = \nabla^2 f(x_{j-1})$. The left figure corresponds to stopping criteria $\| g_k \| < 10^{-7}$ and the right to $\| g_k \| \leq \max \{\epsilon^{DS}, 10^{-13}\}$.}
\label{fig:PPwPC}
\end{figure}
\vspace{-2mm}
\begin{table}[H]
\caption{Average number of iterations required per system of linear equations.}
\begin{footnotesize}
\begin{tabular}{ccccccc}
                                       &                               &                           &                          & {[}$n^{\frac{1}{2}}, n^{\frac{2}{3}}, n^{\frac{21}{30}}${]}       & {[}$n^{\frac{1}{2}}, n^{\frac{2}{3}}, n^{\frac{21}{30}}${]}         & {[}$n^{\frac{1}{2}}, n^{\frac{2}{3}}, n^{\frac{21}{30}}${]}  \\
$B_0$                                  & \multicolumn{1}{|c|}{Accuracy} & \multicolumn{1}{c|}{BFGS} & \multicolumn{1}{c|}{PCG} & \multicolumn{1}{c|}{L-BFGS} & \multicolumn{1}{c|}{\texttt{SymPCGs}} & \texttt{Vsr1}                  \\ \hline
\multirow{2}{*}{$I$}                   & \multicolumn{1}{|c|}{low}      & 223                   &                    \multicolumn{1}{|c|}{845} &   [842, 753, 704]      &   \multicolumn{1}{|c|}{[860, 616, 589]}             &     [858, 618, 581]          \\
                                       & \multicolumn{1}{|c|}{high}     & 319                       &                    \multicolumn{1}{|c|}{994} &   [997, 930, 912]      &   \multicolumn{1}{|c|}{[1020, 819, 778]}             &     [1022, 815, 770]          \\
\multirow{2}{*}{$\nabla^2 f_{-}$} & \multicolumn{1}{|c|}{low}      & 24                       &                    \multicolumn{1}{|c|}{125}  &   [127, 37, 30]       &   \multicolumn{1}{|c|}{[118, 26, 24]}    &     [125, 27, 24]          \\
                                       & \multicolumn{1}{|c|}{high}     & 29                       &     \multicolumn{1}{|c|}{141} &   [135, 45, 35]       &   \multicolumn{1}{|c|}{[132, 32, 29]}     &     [140, 32, 30]     
\end{tabular} \label{table:aveNumIt}
      \end{footnotesize}
\end{table}

\begin{table}[H]
\centering
\caption{Average size relative to $m$ of the systems solved per iteration.}
\begin{footnotesize}
\begin{tabular}{cccccccc}
                                       &                               & \multicolumn{3}{c}{\texttt{SymPCGs}} & \multicolumn{3}{c}{\texttt{Vsr1}} \\
$B_0$                                  & \multicolumn{1}{|c|}{Accuarcy} & $n^{\frac{1}{2}}$    & $n^{\frac{2}{3}}$   & \multicolumn{1}{c|}{$n^{\frac{21}{30}}$}   & $n^{\frac{1}{2}}$    & $n^{\frac{2}{3}}$   & $n^{\frac{21}{30}}$   \\ \cline{1-8}
\multirow{2}{*}{$I$}                   & \multicolumn{1}{|c|}{low}      &   0.984 & 0.939  & \multicolumn{1}{c|}{0.924}  &   0.984 &     0.949 &  0.923    \\
                                       & \multicolumn{1}{|c|}{high}     & 0.985  &  0.948 &\multicolumn{1}{c|}{0.933}  &  0.985  &      0.948 &    0.933  \\
\multirow{2}{*}{$\nabla^2 f_-$} & \multicolumn{1}{|c|}{low}      &  0.929 &  0.443 &  \multicolumn{1}{c|}{0.335}  &   0.933&      0.446 &   0.342   \\
                                       & \multicolumn{1}{|c|}{high}     & 0.927 &  0.457  & \multicolumn{1}{c|}{0.345}  &   0.931 &      0.461 &     0.355
\end{tabular}\label{table:aveSizeSolv}
      \end{footnotesize}
\end{table}

\begin{figure}[H]
\parbox{5.9cm}{
\includegraphics[width=6.5cm]{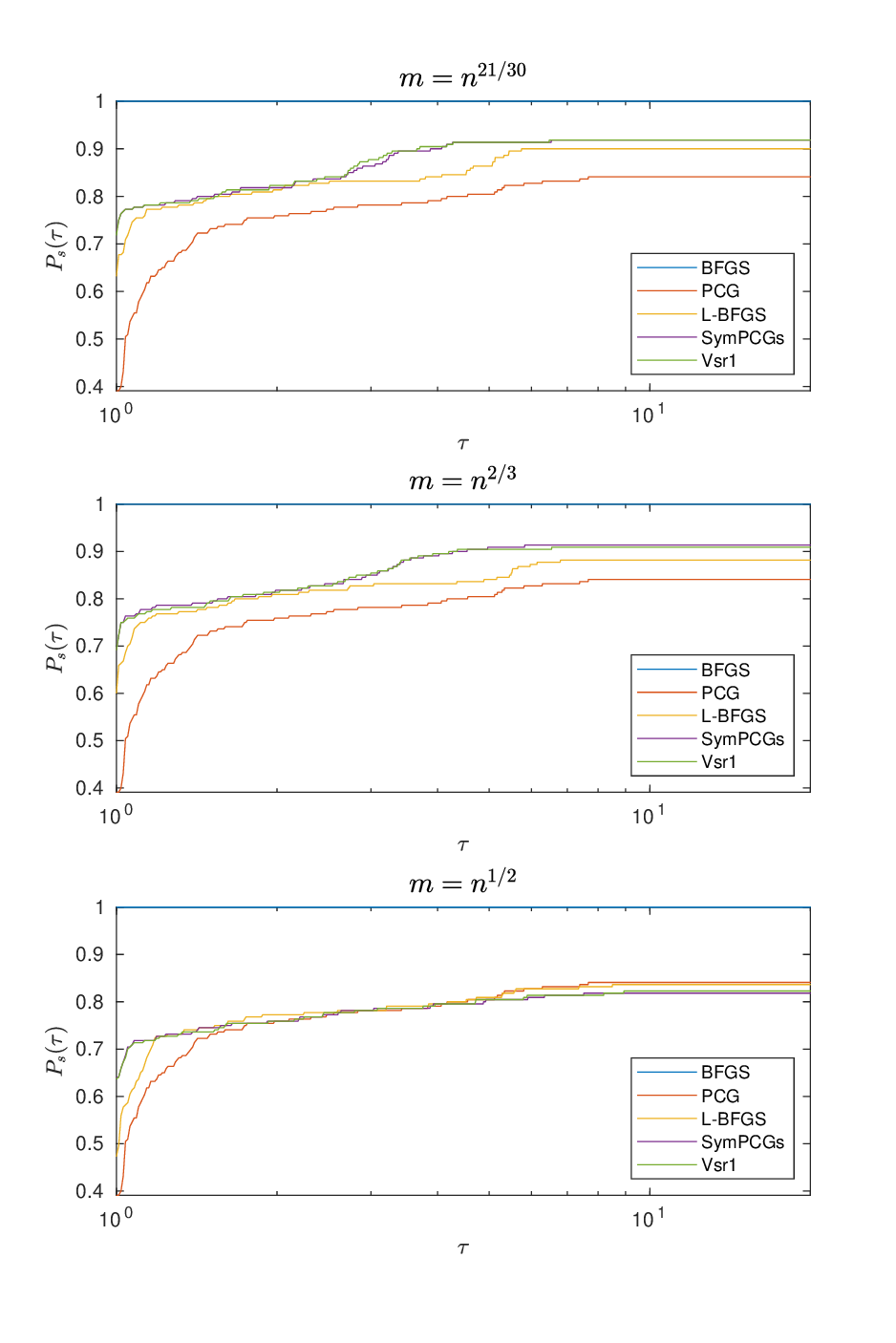}}
\begin{minipage}{5.7cm}
\includegraphics[width=6.5cm]{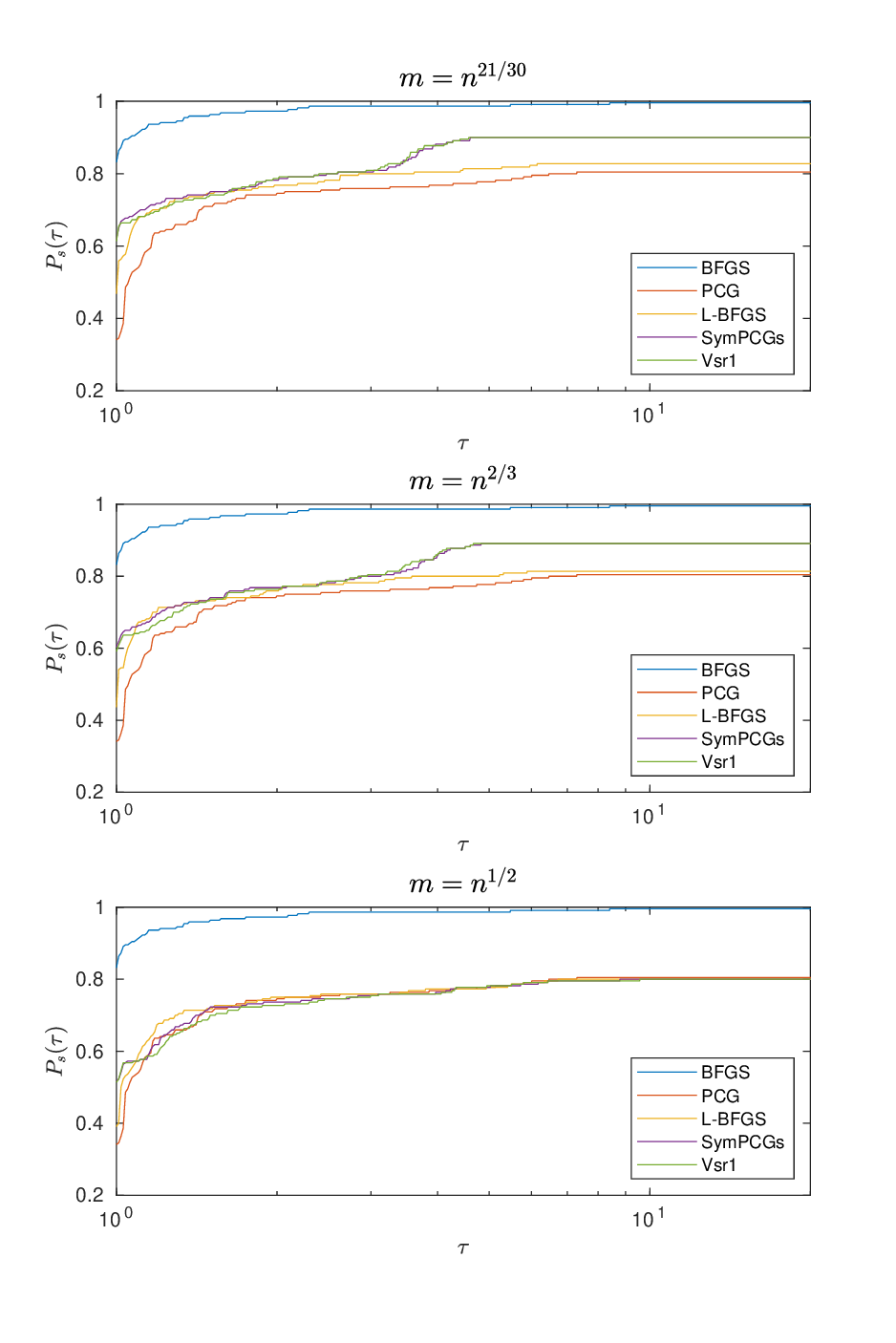} 
\end{minipage}
\caption{Performance profiles with $B_0 = I$. The left figure corresponds to stopping criteria $\| g_k \| < 10^{-7}$ and the right to $\| g_k \| \leq \max \{\epsilon^{DS}, 10^{-13}\}$.}
\label{fig:PPwoPC}
\end{figure}

The results in Figure~\ref{fig:PPwPC}-\ref{fig:PPwoPC} and
Table~\ref{table:aveNumIt} show that the BFGS method has the best
overall performance and PCG the worst. With preconditioner
\texttt{SymPCGs} and \texttt{Vsr1} outperform L-BFGS for the two
larger limited memory values $m$. This difference is not as distinct
in the non-preconditioned case, as can be seen in
Figure~\ref{fig:PPwoPC} even though \texttt{SymPCGs} and \texttt{Vsr1}
are able to solve more problems within the maximum number of
iterations compared to L-BFGS. Numerical experiments have shown that,
depending on the particular instance, small $m$-values can lead to
loss of convergence rate close the solution. Tendencies to this can be
seen in Figure~\ref{fig:PPwPC}-\ref{fig:PPwoPC} and
Table~\ref{table:aveNumIt} for the limited-memory methods with
$m=n^\frac{1}{2}$, which is around $30$ for problems of size
$10^3$. Further decreasing $m$ also increases this tendency. The
memory parameter $m$ might be considered to be large however, as
discussed in Appendix in regards to complexity, this value does not
have a significant impact on the complexity when $m \leq
n^\frac{2}{3}$. As Table~\ref{table:aveSizeSolv} also shows, most
systems solved in the preconditioned case is of size less than
$m$. The slight difference between the low and high tolerances in
Table~\ref{table:aveNumIt}-\ref{table:aveSizeSolv} indicates that the
bulk of the work is made prior to reaching low tolerance level.
Numerical experiments has further shown that the behavior on a
specific system of linear equations not only depends on the choices
when designing the method but also on the individual properties of the
linear system. We choose to give the results for the methods denoted
by \texttt{SymPCGs} and \texttt{Vsr1} since we believe that these are
representative for the proposed limited-memory quasi-Newton class,
given the complexity restrictions, on the test set. However we have
observed that the results can be improved and dis-improved for
different choices of the quantities in Table~\ref{table:MethodPar}. We
conclude that there is a potential for accelerating the process of
solving sequences of related systems of linear equations with
iterative methods in the proposed framework.

\section{Conclusion} \label{section:conc}

In this work we have given a class of limited-memory quasi-Newton
Hessian approximations which on (\ref{eq:QP}) with the exact
linesearch method of Algorithm~\ref{alg:ELSMQP} generate $p_k$
parallel to $p_k^{PCG}$. With the framework of reduced-Hessians this
provides a dynamical framework for the construction of limited-memory
quasi-Newton methods.  In addition, we have characterized all
symmetric rank-two update matrices, $U_k$ with $\mathcal{R}(U_k) =
\textit{span}\big(\{ g_{k-1}, g_k \}\big)$ which gives $p_k$ parallel
to $p_k^{PCG}$ in this setting. The Hessian approximations were
described by a novel compact representation whose framework was first
presented in Section~\ref{section:CR} for the full Broyden class on
unconstrained optimization problems (\ref{eq:uncP}). The
representation of the full Broyden class consists only of explicit
matrices and gradients as vector components.

Numerical simulations on randomly generated unconstrained quadratic
optimization problems have shown that for these problems our suggested
multi-\linebreak parameter class, with parameters within a certain range, is
equivalent to the BFGS method in finite arithmetic. Simulations on
sequences of related systems of linear equations which originate from
the CUTEst test collection have given an indication of the potential
of the the proposed class of limited-memory methods. The results
indicate that there is a potential for accelerating the process of
solving sequences of related systems of linear equations with
iterative methods in the proposed framework.

The results of this work are meant to contribute to the theoretical
and numerical understanding of limited-memory quasi-Newton methods for
minimizing a quadratic function. We envisage that they can lead to
further research on limited-memory methods for unconstrained
optimization problems. In particular, limited-memory methods for
minimizing a near-quadratic function and for systems arising as
interior-point methods converge.

\subsubsection*{Acknowledgements}
  We thank Elias Jarlebring for valuable discussions on finite
  precision arithmetic. In addition, we thank the referees and
  the associate editor for many helpful comments and suggestions which
  significantly improved the presentation.
\newpage
\begin{footnotesize}
\section*{Appendix}
\setcounter{section}{1}
\setcounter{table}{0}
\setcounter{equation}{0}
\counterwithin{table}{section}
\counterwithin{equation}{section}
\renewcommand{\thesection}{\Alph{section}}%
The following lemma relates a symmetric rank-one update in the matrix
to a scaling of the solution when the rank-one vector is a multiple of
the right-hand side.

\begin{lemma}\label{Ap:lemma:Axb}
If $Ax=b$, with $A$ nonsingular then 
\begin{equation*}
\left( A+\gamma b b^T \right)y = b, \ \text{for} \ y= \frac1{1+\gamma b^Tx} x,
\end{equation*}
if $1+\gamma b^Tx \neq 0$. If, in addition, $b\T x\ne 0$, it holds that
\begin{equation}
\frac1{b\T y}=\frac1{b\T x}+\gamma.
\label{Ap:eq:lemma1eq1}
\end{equation}
Finally, if $A=A^T\succ 0$, then $b\T x >0$ and $A+\gamma b b^T \succ
0$ if and only if
\begin{equation*}
\gamma > -\frac1{b\T x}.
\end{equation*}
\end{lemma}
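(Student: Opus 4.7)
The plan is to dispatch the three claims in sequence, each reducing to a brief computation once the right identity is in hand. For the first claim, I would substitute the proposed $y=x/(1+\gamma b^Tx)$ into $(A+\gamma bb^T)y$; using $Ax=b$ and the scalar nature of $b^Tx$ gives $\frac{1}{1+\gamma b^Tx}(b+\gamma(b^Tx)b)=b$, with the hypothesis $1+\gamma b^Tx\neq 0$ ensuring the denominator is well-defined. For the second claim, I would take the inner product of the closed-form $y$ with $b$ to obtain $b^Ty=b^Tx/(1+\gamma b^Tx)$, and invert under $b^Tx\neq 0$ to reach (\ref{Ap:eq:lemma1eq1}).

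For the third claim, I would first use $A\succ 0\Rightarrow A^{-1}\succ 0$ combined with $x=A^{-1}b$ to conclude $b^Tx=b^TA^{-1}b>0$ whenever $b\neq 0$ (the degenerate case $b=0$ forces $x=0$ and renders the statement vacuous). For the positive-definiteness characterization, my preferred route is a direct variational argument: for any nonzero $z$, $z^T(A+\gamma bb^T)z=z^TAz+\gamma(b^Tz)^2$. When $\gamma\geq 0$ this is strictly positive by $A\succ 0$; when $\gamma<0$, strict positivity for all nonzero $z$ is equivalent to $-\gamma<\inf_{z:\,b^Tz\neq 0}(z^TAz)/(b^Tz)^2$. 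The Cauchy--Schwarz inequality in the $A$-inner product, namely $(b^Tz)^2=(b^TA^{-1/2}\cdot A^{1/2}z)^2\leq(b^TA^{-1}b)(z^TAz)$, identifies that infimum as $1/(b^TA^{-1}b)=1/(b^Tx)$, attained at $z=x$. Hence $A+\gamma bb^T\succ 0$ iff $\gamma>-1/(b^Tx)$.

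The main obstacle, such as there is one, lies in the positive-definiteness case, since one must simultaneously establish strict positivity and sharpness of the threshold. The Cauchy--Schwarz route above is attractive because the threshold value and the extremal direction $z=x$ emerge from the same inequality, reinforcing the algebraic link to the first two parts of the lemma. An alternative via the matrix determinant lemma, $\det(A+\gamma bb^T)=\det(A)(1+\gamma b^Tx)$, would also work but requires a separate eigenvalue-interlacing argument to rule out the possibility that multiple eigenvalues cross zero as $\gamma$ decreases, which I find less clean.
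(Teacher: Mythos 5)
Your proof is correct, and the first two parts match the paper's in substance (you verify the closed-form $y$ by direct substitution, whereas the paper derives it by premultiplying the equation by $A^{-1}$ and positing $y=\alpha x$; the computation is the same either way). The genuine difference is in the positive-definiteness threshold. The paper argues by congruence, writing $A+\gamma bb^T = A^{1/2}\bigl(I+\gamma A^{-1/2}bb^TA^{-1/2}\bigr)A^{1/2}$, and then applies a similarity transformation sending the middle factor to $I+\gamma xb^T$, whose only eigenvalue different from unity is $1+\gamma b^Tx$; positivity of that single eigenvalue gives the criterion $\gamma>-1/(b^Tx)$ immediately. You instead take the variational route: for $\gamma<0$, strict positivity of $z^TAz+\gamma(b^Tz)^2$ is equivalent to $-\gamma$ lying below $\inf_{b^Tz\neq 0}(z^TAz)/(b^Tz)^2$, which Cauchy--Schwarz in the $A$-inner product identifies as $1/(b^TA^{-1}b)=1/(b^Tx)$, attained at $z=x$. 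Both are complete; the paper's rank-one eigenvalue computation is slightly shorter, while your argument makes the sharpness of the threshold and the extremal direction $z=x$ explicit, and ties the failure mode at $\gamma=-1/(b^Tx)$ back to the vector $x$ appearing in the first two parts. Your aside on the $b=0$ degeneracy is a fair observation (the paper's claim $b^Tx=x^TAx>0$ likewise tacitly assumes $b\neq 0$), and your reason for avoiding the determinant-lemma alternative is sound.
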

\begin{proof}
Assume that $Ax=b$ where $A$ is nonsingular. Premultiplication of $\left( A+\gamma b b^T \right)y  = b$ by $A\inv$ gives
\begin{equation}
\left( I + \gamma A\inv b b^T \right) y = A\inv b.
\label{Ap:eq:l1e1}
\end{equation}
Insertion of $x = A\inv b$ into (\ref{Ap:eq:l1e1}) and rearranging gives
 \begin{equation}
y = \left( 1- \gamma b^Ty \right)x.
\label{Ap:eq:l1e2}
\end{equation} 
Insertion of $y = \alpha x$ into (\ref{Ap:eq:l1e2}) and solving for $\alpha$ yields  
\begin{equation*}
\alpha = \frac1{1+\gamma b^Tx}, \qquad 1+\gamma b^Tx \neq 0.
\end{equation*}
The result in (\ref{Ap:eq:lemma1eq1}) follows by premultiplication of $y= \frac1{1+\gamma b^Tx} x$ by $b^T$ and rearranging. For the final result, note that $b\T x = x^T A x > 0$ since $A \succ 0$ and that
\begin{equation*}
\left( A+\gamma b b^T \right) = A^{1/2}\left( I+\gamma A^{-1/2} b b^T A^{-1/2} \right) A^{1/2},
\end{equation*}
which is a congruent transformation and hence $I+\gamma A^{-1/2} b b^T A^{-1/2} \succ 0$ if and only if $A+\gamma b b^T \succ 0$. Then consider the similarity transformation
\begin{equation*}
A^{-1/2}\left( I+\gamma A^{-1/2} b b^T A^{-1/2} \right) A^{1/2} = I + \gamma xb^T,
\end{equation*}
where the only eigenvalue not equal to unity is $1 + \gamma b\T x$, which is positive only if $\gamma > - \frac1{b\T x}$, $b\T x \neq 0$. 
\end{proof}

Next, we give a nonsingularity condition on a class of
quasi-Newton matrices that we consider.

\begin{lemma} \label{Ap:lemma:Bposdef}
  Let $B_0 \in \mathcal{S}_+^n$ and let $g_i$,
  $i=0,\dots,k$, be nonzero vectors that are conjugate with respect to
  $B_0\inv$. Define $B_k$ as
\begin{equation*}
B_k = B_0 + \sum_{i=0}^{k-1} \left( -\frac1{g_i\T B_0\inv g_i} g_i g_i^T + 
  \rho_i (g_{i+1}-g_i)(g_{i+1}-g_i)^T \right),
\end{equation*}
where $\rho_i \in \mathbb{R}$, $i=0,\dots,k-1$. Then $B_k \succ 0$ if
$\rho_i > 0$, $i=1,\dots,k-1$.
\end{lemma}

\begin{proof} Any vector $p$ in $\mathbb{R}^n$ can be written as
\begin{equation}\label{eqn-p}
p=\sum_{i=0}^{k} \alpha_i B_0\inv g_i + B_0 \inv u, \text{\ with\ } g_i^T
B_0\inv u=0,  \quad i=0,\dots,k.
\end{equation}
Insertion of (\ref{eqn-p}) into $p\T B_k p$ gives
\begin{align*}
  p\T B_k p & =  p\T \left(B_0 + \sum_{i=0}^{k-1} \left[ -\frac1{g_i\T B_0\inv g_i} g_i g_i^T +
      \rho_i (g_{i+1}-g_i)(g_{i+1}-g_i)^T \right]\right) p \nonumber \\
   & =  p\T B_0 p - \sum_{i=0}^{k-1} \frac{\left( g_i^T p \right)^2}{g_i\T B_0\inv g_i}
+ \sum_{i=0}^{k-1} \rho_i \left( (g_{i+1}-g_i)^T p\right)^2 \nonumber \\
& =  \sum_{i=0}^{k} \alpha_i^2 g_i\T B_0\inv g_i +u\T B_0\inv u- \sum_{i=0}^{k-1} \frac{\left( \alpha_i g_i\T B_0\inv g_i \right)^2}{g_i\T B_0\inv g_i} + \sum_{i=0}^{k-1} \rho_i \left((g_{i+1}-g_i)^T p\right)^2 \nonumber \\
& =  \alpha_k^2 g_k\T B_0\inv g_k + u\T B_0\inv u  + \sum_{i=0}^{k-1}
\rho_i \left(\alpha_{i+1} g_{i+1} ^T B_0\inv g_{i+1} -
\alpha_{i} g_{i} ^T
  B_0\inv g_{i}\right)^2.
\end{align*}
For the remainder of the proof, let $\rho_i > 0$, $i = 0,\dots,
k-1$. Then $B_k$ is positive
semidefinite with $p\T B_k p=0$ only if
\begin{subequations}
\begin{align}
\alpha_k^2 g_k\T B_0\inv g_k & = 0, \label{eqn-zeroa} \\
u\T B_0\inv u &= 0, \label{eqn-zerob} \\
\alpha_{i+1} g_{i+1}^T B_0\inv g_{i+1} -\alpha_{i} g_{i}^T
  B_0\inv g_{i} & = 0, \quad i=0,\dots,k-1. \label{eqn-zeroc}
\end{align}
\end{subequations}
From the positive definiteness of $B_0$, (\ref{eqn-zeroa}) gives
$\alpha_k=0$, which in combination with  (\ref{eqn-zeroc}) gives
$\alpha_i=0$, $i=0,\dots,k$. In addition, (\ref{eqn-zerob}) gives
$u=0$. Therefore, $p\T B_k p=0$ only if $p=0$, proving that $B_k$ is
positive definite. 
\end{proof}
In Theorem~\ref{thm:LMQN}, it has been shown how search directions
parallel to PCG may be generated in a limited-memory setting if
search directions are included in the basis in addition to
gradients. The following theorem gives a way of using gradients only
by modifying the right-hand side.
\begin{theorem}\label{thm:LMCgrad}
  Consider iteration $k$, $1\leq k < r$, of the exact-linesearch method of
  Algorithm~$\ref{alg:ELSMQP}$ for solving $(\ref{eq:QP})$. Assume that
  $p_i=\delta_i p_i^{PCG}$ with $\delta_i \neq 0$ for $i= 0, \dots ,
  k-1$, where $p_i^{PCG}$ is the search direction
  of PCG with associated $B_0 \in \mathcal{S}_+^n$, as stated in
  $(\ref{eq:pPCG})$. Let $\mathcal{A}_k = \{ j_1, \dots , j_{m_k}\} \subseteq \{ 0, \dots, k \}$ with $j_1<j_2< \dots < j_{m_k}$ such that $j_{m_k} = k$ and let $\mathcal{I}_k = \{ 0, \dots
  k\} \setminus \mathcal{A}_k$. Furthermore, let $p_k$ satisfy $B_kp_k = -N_k g_k$ where
\begin{subequations}
\begin{align} 
B_k & = B_0 +\sum_{i=1}^{m_k-1} \left(-\frac1{g_{j_i}^TB_0\inv g_{j_i}}g_{j_i}
  g_{j_i}^T +
  \rho_{j_i}^{(k)} (g_{j_{i+1}}-g_{j_i})(g_{j_{i+1}}-g_{j_i})^T\right)
 \nonumber \\ & \quad + \left( \frac{1}{\delta_k}-1 \right)\frac{1}{g_k^TB_0^{-1}g_k} g_{k} g_{k}^T, \label{eq:BgenLC}
 \end{align}
and
 \begin{align} 
N_k  =  I + \delta_k \sum_{i \in \mathcal{I}_k} \left(
  \frac1{g_i^T B_0 \inv g_i }\right) g_i g_k^T B_0^{-1},
\label{eq:hgenLC}
\end{align}
\label{eq:genLC}
\end{subequations}
\hspace{-4pt} where $\delta_k$ and $\rho_{j_i}^{(k)}$, $i = 1, \dots, m_k-1 $, are chosen such that $B_k$ is nonsingular. Then \[ p_k = \delta_k p_k^{PCG}.\] In particular, if $\rho_{j_i}^{(k)}>0$, $i = 1, \dots, m_k-1 $, and
$\delta_k > 0$, then $B_k\succ0$.
\end{theorem}
\begin{proof}
We will show that $p_k = \delta_k p_k^{PCG}$, $\delta_k \neq 0$ satisfies $B_k p_k = -N_k g_k$. Note that by Lemma~\ref{lemma:conjProp} it follows that $g_i^T p_k^{PCG} = - g_k^T B_0 \inv g_k$, $i=0,\dots, k$ and hence
\begin{equation} \label{eq:appendixProofEq1}
 \delta_k B_k p_k^{PCG} = \delta_k \left( B_0 p_k^{PCG} +  \sum_{i=1}^{m_k-1} \frac{g_k^T B_0^{-1} g_k}{g_{j_i}^TB_0\inv g_{j_i}}g_{j_i} 
   - \left( \frac{1}{\delta_k}-1 \right)g_{k} \right).
\end{equation}
Insertion of $p_k^{PCG}$ as in (\ref{eq:pPCGexp}) with $M = B_0$ into (\ref{eq:appendixProofEq1}) gives
\begin{align*}
 \delta_k B_k p_k^{PCG} & = \delta_k \left(- \sum_{i=0}^k \frac{g_k^T B_0^{-1} g_k}{g_i^TB_0^{-1}g_i} g_i +   \sum_{i=1}^{m_k-1} \frac{g_k^T B_0^{-1} g_k}{g_{j_i}^TB_0\inv g_{j_i}}g_{j_i} 
   - \left( \frac{1}{\delta_k}-1 \right)g_{k} \right)  \\
   & = -\delta_k \sum_{i \in \mathcal{I}_k}
   \frac{g_k^T B_0^{-1}g_k}{g_i^T B_0 \inv g_i } g_i  - g_k =
-N_k g_k,
\end{align*}
with $N_k$ given by (\ref{eq:hgenLC}). Thus $p_k = \delta_k p_k^{PCG}$,
$\delta_k \neq 0$ is a solution to $B_k p_k = -N_k g_k$, since $B_k$
is nonsingular this is also the unique solution. The matrix $B_k$ is
positive definite with $\delta_k = 1$ and $\rho_{j_i}^{(k)} > 0$,
$i=1,\dots, m_k-1$ by Lemma~\ref{Ap:lemma:Bposdef}. The steps to show
that $B_k \succ 0$ if it in addition holds that $\delta_k>0$ is
analogues to the last steps of Lemma~\ref{Ap:lemma:Axb}. 
\end{proof}
If all indices are chosen to be active in the Hessian approximation of (\ref{eq:BgenLC}) then it is equivalent to (\ref{eq:BMuP}) of Proposition~\ref{prop:genHuangQP} where $\varphi_k$ relates to $\delta_k$ as in (\ref{eq:1to1}). Conversely, if the indices corresponding to all previous gradients, i.e. $i = 0, \dots, k-1$ are inactive, and $\delta_k = 1$ then (\ref{eq:genLC}) with \linebreak $B_k p_k = -N_k g_k$ is equivalent to the PCG update (\ref{eq:pPCGexp}). The update scheme of Theorem~\ref{thm:LMCgrad} contains only gradients as vector components and with the exact linesearch method of Algorithm~\ref{alg:ELSMQP} the finite termination property is maintained. However, this is at the expense of adding a correction term on the right hand side. Moreover, the update in Theorem~\ref{thm:LMCgrad} relies heavily on the result in Lemma~\ref{lemma:conjProp} which is exact on quadratic problems. For non-quadratic problems other more accurate approximations and modifications may be considered to improve the method. 

\subsection*{Complexity of \texttt{symPCGs} and \texttt{Vsr1}}  \label{sec:complexity}
It is sufficient to consider the complexity for iteration $k \geq
m$. Note that the first $(m-3)$ columns of $Z_k$ remain constant since
the first $(m-3)$ columns of $Q_k$ remain the same. The basis matrix
may hence be written as $Z_k = \begin{pmatrix} Z_0 &
  \bar{Z}_{k} \end{pmatrix}$ where $Z_0 \in \mathbb{R}^{n \times
  (m-3)}$ and $\bar{Z}_k \in \mathbb{R}^{n \times 3}$. This reduces
the complexity of the Gram-Schmidt process to $O(mn)$. Moreover, for
both \texttt{SymPCGs} and \texttt{Vsr1} the matrix $B_k$ can be written as \[ B_k = B_0 + V_0^\rho + F_k , \]
where $V_0^\rho = \sum_{i=0}^{m-4} \rho_i^B (g_{i+1}-g_i)(g_{i+1}-g_i)^T$ and $F_k = V_k + \sum_{i=k-3}^{k-1} \rho_i^B (g_{i+1}-g_i)(g_{i+1}-g_i)^T$. Note that $F_k$ has a compact representation of rank-five in \texttt{symPCGs} and of rank-four in \texttt{Vsr1} respectively. Moreover, note that the matrix multiplication $\hat{M} F \hat{N}$ where $\hat{M} \in \mathbb{R}^{q_1 \times n}$, $\hat{N} \in \mathbb{R}^{n \times q_2}$ and \linebreak $F \in \mathbb{R}^{n \times n}$ is of rank $\hat{r}$ with a known compact representation, has complexity \linebreak $O(nq_1\hat{r} +  q_1  q_2\hat{r} + n  q_2\hat{r})$. The reduced-Hessian can at iteration $k$ be written as
\begin{equation*}
Z_k^T B_k Z_k = \begin{pmatrix}
Z_0^T ( B_0 + V_0^\rho ) Z_0 + Z_0^T F_k Z_0 & Z_0^T (  B_0 + V_0^\rho )  \bar{Z}_k  + Z_0^T  F_k  \bar{Z}_k \\
\bar{Z}_k^T (  B_0 + V_0^\rho ) Z_0 + \bar{Z}_k^T F_k Z_0 & \bar{Z}_k^T (  B_0 + V_0^\rho + F_k ) \bar{Z}_k \end{pmatrix}.
\end{equation*}
The matrices $Z_0^T ( B_0 + V_0^\rho ) Z_0 \in \mathbb{R}^{(m-3)\times (m-3)}$ and $(B_0 + V_0^\rho)Z_0 \in \mathbb{R}^{n\times (m-3)}$ have been successively built for iteration $k < m-3$ and can be stored. What remains to be computed and the corresponding complexity, taking into account that $\hat{r}\leq 5$, is shown in Table~\ref{table:complexity}.
  \begin{table}[H]
  \footnotesize
      \centering
      \caption{Complexity for computing quantities in the reduced-Hessian, $Z_k^T B_k Z_k$.}
\begin{tabular}{c|c|c|c} \label{table:complexity}
  $Z_0^T F_k Z_0$ & $\bar{Z}_k^T (  B_0 + V_0^\rho ) Z_0$ & $ \bar{Z}_k^T F_k Z_0$ & $\bar{Z}_k^T (  B_0 + V_0^\rho + F_k ) \bar{Z}_k$ \\ \hline\noalign{\smallskip} 
 $O( m^2 + nm)$ &  $O(nm) $ & $ O(n+m+nm)$ & $O(n+n^2)$
      \end{tabular}  
\end{table} 
\noindent
It remains to add everything together, which has complexity $O(m^2)$, and to factorize, which has complexity $m^3$. The overall asymptotic complexity is thus dominated by $\max\{n^2, m^3\}.$\\
\end{footnotesize}

\newpage
\bibliographystyle{myplain}  
\bibliography{references,references2} 

\end{document}